\DeclareSymbolFontAlphabet{\mathbb}{AMSb}
\DeclareSymbolFontAlphabet{\mathbbm}{bbold}
\newcommand{\vecf}[1]{\bm{#1}}
\newcommand{\fnt}[1]{\bm{\mathsf{#1}}}
\renewcommand{\aligned}[1]{&#1}
\newcommand{\labell}[1]{\addtocounter{equation}{1}\tag{\theequation}\label{#1}}
\definecolor{dkgreen}{rgb}{0,0.6,0}
\definecolor{gray}{rgb}{0.5,0.5,0.5}
\definecolor{mauve}{rgb}{0.58,0,0.82}
\tiny\color{gray},
\crefname{hypothesis}{Hypothesis}{Hypotheses}
\crefname{fact}{Fact}{Facts}
\title{Entropy stable finite difference methods via entropy correction artificial viscosity and knapsack limiting \thanks{
\funding{Supported by National Science Foundation awards DMS-1943186 and DMS-223148.}}}
\author{Brian Christner\thanks{Department of Computational Applied Mathematics and Operations Research, Rice University 
  (\email{bc71@rice.edu}).}
\and Jesse Chan\thanks{Oden Institute for Computational Engineering and Sciences and Department of Aerospace Engineering and Engineering Mechanics, The University of Texas at Austin}
  (\email{jesse.chan@oden.utexas.edu}).
  }
\begin{document}
\maketitle

\begin{abstract}
Entropy stable methods have become increasingly popular in the field of computational fluid dynamics. They often work by satisfying some form of a discrete entropy inequality: a discrete form of the 2nd law of thermodynamics. Schemes which satisfy a (semi-)discrete entropy inequality typically behave much more robustly, and do so in a way that is hyperparameter free. Recently, a new strategy was introduced to construct entropy stable discontinuous Galerkin methods: knapsack limiting, which blends together a low order, positivity preserving, and entropy stable scheme with a high order accurate scheme, in order to produce a high order accurate, entropy stable, and positivity preserving scheme. Another recent strategy introduces an entropy correction artificial viscosity into a high order scheme, aiming to satisfy a cell entropy inequality. 

In this work, we introduce the techniques of knapsack limiting and artificial viscosity for finite difference discretizations. The proposed schemes preserve high order accuracy in sufficiently smooth conditions, are entropy stable, and are hyperparameter free. Moreover, the proposed knapsack limiting scheme provably preserves positivity for the compressible Euler and Navier-Stokes equations. Both schemes achieve this goal without significant performance tradeoffs compared to state of the art stabilized schemes.
\end{abstract}

\begin{keywords}
Entropy Stable, Positivity Preserving, Finite Difference Methods
\end{keywords}

\begin{MSCcodes}
68Q25, 68R10, 68U05
\end{MSCcodes}

\section{Introduction}\label{Introduction}

In computational science, particularly when addressing multi-dimensional systems, numerically solving partial differential equations (PDEs) is critical. A classical method for numerically approximating the solutions to PDEs is the finite difference method. One of the primary advantages of finite difference methods are their ability to achieve arbitrarily high spatial accuracy by using wide stencils. Moreover, they are often the simplest and most computationally inexpensive schemes to implement. This makes them a valuable tool for capturing fine details of complex systems. The high order accuracy enables the numerical solution to approximate the true solution with a coarser mesh, reducing the computational time and resource expenditure. 

Despite its advantages, high order finite difference methods are not without limitations, the most notable being a lack of robustness. High order methods often exhibit robustness issues or may fail to converge to accurate solutions. In response, essentially non-oscillatory (ENO) and weighted ENO (WENO) schemes have been proposed to mitigate these issues. These schemes can be applied to finite difference to construct an arbitrarily high order method, whose resulting simulations avoid oscillations around shocks. These schemes can also be used to construct solutions which satisfy nodal bounds. For instance, they can be used to satisfy positivity of density and pressure for the compressible Euler and Navier-Stokes equations. Though WENO schemes avoid oscillations in their simulations, they are generally not provably entropy stable \cite{Fisher13}. 

One alternate approach to constructing high order numerical methods which has gained traction within computational fluid dynamics (CFD) are discontinuous Galerkin (DG) methods. A common approach to resolve issues of robustness within DG methods is to satisfy a cell entropy inequality. The resulting methods are often called entropy stable DG (ESDG) methods. They often work by satisfying a (semi-)discrete entropy inequality: a discrete form of the 2nd law of thermodynamics. When discretizing conservation laws, it is often the case that the 2nd law of thermodynamics is not satisfied under discretization. By enforcing such an inequality, the resulting solutions typically behave much more robustly, and do so in a way that is hyperparameter free. 

A recent technique for enforcing semi-discrete entropy stability for nodal DG methods, also known as DG spectral element methods (DGSEM), is \textit{knapsack limiting}. At each right-hand-side evaluation, knapsack limiting optimally blends together a low order, entropy stable, positivity preserving update, with the high order accurate update coming from DGSEM. The resulting update is entropy stable, arbitrarily high order accurate in smooth regions, and satisfies a relative positivity condition. Moreover, it was shown in theory \cite{Vilar25} and in practice \cite{Lin24, Christner25} that the resulting scheme preserves high order accuracy in sufficiently smooth regions. The methods developed in this paper are related to knapsack limiting, and utilize the same conceptual approach of computing optimal stabilization parameters using an optimization formulation with an inexpensive solver. 

Another recent strategy in the field of DG methods for satisfying a semi-discrete entropy inequality are entropy correction artificial viscosity methods \cite{JesseAV}. This scheme introduces an artificial viscosity into a high order accurate update, in order to satisfy a cell entropy inequality. The resulting methods are much more intuitive than the knapsack limiting scheme. This resulting scheme also exhibits arbitrarily high order accuracy for sufficiently regular cases. 

In this work, we introduce the techniques of artificial viscosity entropy correction from the field of DG methods into the realm of finite difference methods. The first proposed scheme, entropy correction artificial viscosity for finite differences (ECAV-FD) preserves high order accuracy in sufficiently smooth regions, is entropy stable, and is parameter free. The second proposed scheme, knapsack limiting for finite differences (KL-FD) is a generalization of the ECAV-FD scheme, which can also provably preserve positivity.

\section{High Order Finite Difference Methods}\label{High Order Finite Difference Methods}

We will now discuss the finite difference discretization, for which we will utilize summation by parts (SBP) finite difference operators. For the sake of brevity, we will only describe how to construct SBP operators in $d\in\left\{1,2\right\}$ dimensions, with the note that extension to higher dimensions is possible. The derived schemes will be written in such a way that an arbitrary number of dimensions are applicable. 

We consider a hyperbolic conservation law imposed over domain $\Omega\subseteq\mathbb{R}^{d}$ with the imposed fluxes $\vecf{f}_{k}:\mathbb{R}^{v}\to\mathbb{R}^{v}$:
\begin{align*}\frac{\partial\vecf{u}}{\partial t}+\sum_{k=1}^{d}\frac{\partial\vecf{f}_{k}\left(\vecf{u}\right)}{\partial x_{k}}=\fnt{0}\labell{eq:ConservationLaw},\end{align*}
where $\vecf{u}:\mathbb{R}^{d}\times\left[0,T\right]\to\mathbb{R}^{v}$, $v$ is the number of variables, and $T$ is the final time. The domain $\Omega$ is discretized using equally spaced points $\fnt{x}\in\Omega$. $n$ is the total number of nodes. The discretized solution can now be expressed $\fnt{u}:\left[0,T\right]\to\left(\mathbb{R}^{v}\right)^{n}$, such that $\fnt{u}\left(t\right)$ consists of pointwise evaluations of $\vecf{u}$ along the nodes $\fnt{x}$. Now, we consider $\fnt{D}_{1},...,\fnt{D}_{d}\in\mathbb{R}^{n\times n}$ finite difference matrices of accuracy order $N$ along each dimension. At this point, the conservation law can be discretized
\begin{align*}\fnt{M}\frac{\text{d}\fnt{u}^{H}}{\text{d}t}+\sum_{k=1}^{d}\left(\fnt{D}_{k}\vecf{f}_{k}\left(\fnt{u}\right)\right)=\fnt{0},\labell{eq:HOM}\end{align*}
where $\fnt{M}\in\mathbb{R}^{n\times n}$ is a diagonal mass matrix corresponding to $\fnt{D}_{1},...,\fnt{D}_{d}$. An equivalent \textit{flux-differencing} form can be derived:
\begin{align*}\sum_{k=1}^{d}\left(\fnt{D}_{k}\vecf{f}_{k}\left(\fnt{u}\right)\right)_{i}\aligned{=}\sum_{k=1}^{d}\sum_{j}^{ }2\fnt{D}_{k,ij}\left(\frac{\vecf{f}_{k}\left(\fnt{u}_{i}\right)+\vecf{f}_{k}\left(\fnt{u}_{j}\right)}{2}\right),\labell{eq:FluxDiffL1}\\
\aligned{=}\sum_{j}^{ }\left\lVert\fnt{n}_{ij}\right\rVert\cdot\left(\frac{\vecf{f}\left(\fnt{u}_{i}\right)+\vecf{f}\left(\fnt{u}_{j}\right)}{2}\cdot\frac{\fnt{n}_{ij}}{\left\lVert\fnt{n}_{ij}\right\rVert}\right),\end{align*}
where $\fnt{n}_{ij}\in\mathbb{R}^{d}$ is defined such that $\fnt{n}_{ij,k}=2\fnt{D}_{k,ij}$, and $\vecf{f}:\mathbb{R}^{v}\to\mathbb{R}^{v\times d}$, such that the $k$th column $\left(\vecf{f}\left(\vecf{u}\right)\right)_{k}=\vecf{f}_{k}\left(\vecf{u}\right)$. The relation on \eqref{eq:FluxDiffL1} arises from the property that $\fnt{D}_{k}\fnt{1}=\fnt{0}$. Importantly, 
\begin{align*}\vecf{f}^{\text{central}}\left(\fnt{u}_{i},\fnt{u}_{j},\hat{\fnt{n}}_{ij}\right)=\left(\frac{\vecf{f}\left(\fnt{u}_{i}\right)+\vecf{f}\left(\fnt{u}_{j}\right)}{2}\cdot\hat{\fnt{n}}_{ij}\right),\labell{eq:CentralFlux}\end{align*}
is known as the \textit{central flux}. We can therefore generalize the discretization of the conservation law \eqref{eq:ConservationLaw} as such:
\begin{align*}\fnt{M}\frac{\text{d}\fnt{u}^{H}}{\text{d}t}+\fnt{r}^{H}\aligned{=}\fnt{0},\\
\fnt{r}_{i}^{H}\aligned{=}\sum_{j}^{ }\left\lVert\fnt{n}_{ij}\right\rVert\vecf{f}^{\text{vol}}\left(\fnt{u}_{i},\fnt{u}_{j},\frac{\fnt{n}_{ij}}{\left\lVert\fnt{n}_{ij}\right\rVert}\right),\labell{eq:FluxDiff}\end{align*}
where $\fnt{r}^{H}$ is called the \textit{high order volume term}, and $\vecf{f}^{\text{vol}}$ is a suitable volume flux, which satisfies the two following properties:
\begin{align*}\vecf{f}^{\text{vol}}\left(\fnt{u},\fnt{u},\hat{\fnt{n}}\right)\aligned{=}\vecf{f}\left(\fnt{u}\right)\cdot\hat{\fnt{n}}&\text{Consistency},\\
\vecf{f}^{\text{vol}}\left(\fnt{u}_{i},\fnt{u}_{j},\hat{\fnt{n}}_{ij}\right)\aligned{=}-\vecf{f}^{\text{vol}}\left(\fnt{u}_{j},\fnt{u}_{i},-\hat{\fnt{n}}_{ij}\right)&\text{Skew-Symmetry}\labell{eq:VolumeFlux}.\end{align*}
We call this procedure \textit{flux differencing} \cite{Gassner16, Chen17}. The properties of $\vecf{f}^{\text{vol}}$ determine the properties of the resulting solution. If the volume flux is the central flux, the high order accurate scheme \eqref{eq:HOM} is recovered. If the volume flux is entropy stable or entropy conservative as in \eqref{eq:EntropyStable}, the resulting scheme \eqref{eq:FluxDiff} is entropy stable or entropy conservative, respectively. For instance, the local Lax-Friedrichs flux (LxF)
\begin{align*}\vecf{f}^{\text{LxF}}\left(\fnt{u}_{i},\fnt{u}_{j},\hat{\fnt{n}}_{ij}\right)=\vecf{f}^{\text{central}}\left(\fnt{u}_{i},\fnt{u}_{j},\hat{\fnt{n}}_{ij}\right)+\frac{\lambda_{ij}}{2}\left(\fnt{u}_{i}-\fnt{u}_{j}\right),\labell{eq:LxF}\end{align*}
where $\lambda_{ij}$ is the maximum wavespeed of the 1D Riemann problem with $\fnt{u}_{i}$ and $\fnt{u}_{j}$, is a low order, dissipative, entropy stable flux \cite{Guermond19, Pazner21, Lin23, RuedaRamirez23}. 

\section{Entropy Stable Finite Difference Methods}\label{Entropy Stable Finite Difference Methods}

Entropy stable schemes are numerical methods which satisfy a form of entropy inequality. The inequality is derived so that a discrete form of the 2nd law of thermodynamics is satisfied by the scheme. The entropy inequality is a generalization of an energy inequality for nonlinear conservation laws. Assuming admissible quantities, such as positive pressure and density for the compressible Euler equations, a scheme which satisfies a (semi-)discrete entropy inequality typically behaves more robustly. The result is a solution which dissipates mathematical entropy over time, which in some cases is even sufficient to stabilize the solution without relying on heuristic stabilization techniques. 

We are particularly interested in equations which satisfy an entropy inequality. The continuous entropy inequality we wish to mimic at the discrete level is:
\begin{align*}\frac{\partial\eta\left(\vecf{u}\right)}{\partial t}+\sum_{k=1}^{d}\frac{\partial F_{k}\left(\vecf{u}\right)}{\partial x_{k}}\le0,\labell{eq:EntropyInequality}\end{align*}
where $\eta:\mathbb{R}^{v}\to\mathbb{R}$ is called the \textit{entropy function}, and $F_{1},...,F_{d}:\mathbb{R}^{v}\to\mathbb{R}$ are called the \textit{entropy fluxes}. In particular, we choose $\eta$ and $F_{k}$ such that $\left(\eta,F_{k}\right)$ are a \textit{convex entropy, entropy flux pair}. Here, $\eta$ is convex, and
\begin{align*}\frac{\partial F_{k}}{\partial\vecf{u}}=\vecf{v}^{T}\frac{\partial\vecf{f}_{k}\left(\vecf{u}\right)}{\partial\vecf{u}},\end{align*}
where $\vecf{v}=\nabla_{\vecf{u}}\eta$ are referred to as the \textit{entropy variables}. The \textit{entropy fluxes} $F_{k}$ are given by
\begin{align*}F_{k}\left(\vecf{u}\right)=\vecf{v}^{T}\vecf{f}_{k}\left(\vecf{u}\right)-\psi_{k}\left(\vecf{u}\right),\labell{eq:EntropyPotential}\end{align*}
where $\psi_{1},...,\psi_{d}:\mathbb{R}^{v}\to\mathbb{R}$ are referred to as the \textit{entropy potentials} \cite{Godlewski13}. We define an entropy \textit{stable} or \textit{conservative} flux as one which satisfies one of the following properties:
\begin{align*}\left(\fnt{v}_{j}-\fnt{v}_{i}\right)^{T}\vecf{f}\left(\fnt{u}_{i},\fnt{u}_{j},\hat{\fnt{n}}_{ij}\right)\aligned{=}\left(\psi\left(\fnt{u}_{j}\right)-\psi\left(\fnt{u}_{i}\right)\right)\cdot\hat{\fnt{n}}_{ij}&\text{Entropy Conservative},\\
\left(\fnt{v}_{j}-\fnt{v}_{i}\right)^{T}\vecf{f}\left(\fnt{u}_{i},\fnt{u}_{j},\hat{\fnt{n}}_{ij}\right)\aligned{\le}\left(\psi\left(\fnt{u}_{j}\right)-\psi\left(\fnt{u}_{i}\right)\right)\cdot\hat{\fnt{n}}_{ij}&\text{Entropy Stable}.\labell{eq:EntropyStable}\end{align*}
Recall, if a volume flux $\vecf{f}^{\text{vol}}$ is entropy stable, the flux differencing scheme \eqref{eq:FluxDiff} satisfies an entropy inequality \eqref{eq:EntropyInequality}. For instance, the scheme \eqref{eq:LOM} with entropy stable flux $\vecf{f}^{\text{LxF}}$ satisfies the entropy inequality \eqref{eq:EntropyInequality}. 

\subsection{Discrete Nodal Entropy Stability}\label{Discrete Nodal Entropy Stability}

In this section, we will discuss the discretization of a high order accurate (in sufficiently regular regions), conservative, entropy stable scheme of the form:
\begin{align*}\fnt{M}\frac{\text{d}\fnt{u}}{\text{d}t}+\fnt{r}\aligned{=}\fnt{0},\\
\fnt{r}_{i}\aligned{=}\sum_{j}^{ }\left\lVert\fnt{n}_{ij}\right\rVert\vecf{f}_{ij}\left(\fnt{\theta}_{ij}\right)\labell{eq:BlendedScheme}.\end{align*}
Here, $\fnt{\theta}\ge0$ are called the \textit{diffusion coefficients}, which parameterize $\vecf{f}_{ij}\left(\fnt{\theta}_{ij}\right)$ via
\begin{align*}\vecf{f}_{ij}\left(\fnt{\theta}\right)=\vecf{f}_{ij}^{H}+\fnt{\theta}\left(\fnt{u}_{i}-\fnt{u}_{j}\right),\labell{eq:AVFlux}\end{align*}
where $\vecf{f}_{ij}^{H}=\vecf{f}^{\text{central}}\left(\fnt{u}_{i},\fnt{u}_{j},\hat{\fnt{n}}_{ij}\right)$ is the high order accurate central flux. Assuming $\fnt{\theta}_{ij}$ is symmetric, this is equivalent to adding a weighted graph Laplacian term to the formulation through the high order flux. That is, $\fnt{\theta}_{ij}$ is proportional to dissipation introduced into the flux $\vecf{f}_{ij}^{H}$.

We will now discuss a high order accurate (in sufficiently smooth regions) discretization of the entropy inequality \eqref{eq:EntropyInequality} on node $x_{i}$. The chain rule allows us to decompose the time derivative of the entropy function as $\left[\frac{\partial\eta\left(\vecf{u}\right)}{\partial t}\right]_{x_{i}}=\left(\nabla_{\vecf{u}}\eta\right)_{x_{i}}^{T}\left(\frac{\partial\vecf{u}}{\partial t}\right)_{x_{i}}$. Using this fact, we will discretize the time derivative of the entropy $\eta$ at $x_{i}$ by $\fnt{v}_{i}^{T}\frac{\text{d}\fnt{u}_{i}}{\text{d}t}$, where $\fnt{v}=\vecf{v}\left(\fnt{u}\right)$ consists of pointwise evaluations of the entropy variables. To discretize the spatial derivative of the entropy fluxes, we will apply a flux differencing procedure on the entropy potential relation \eqref{eq:EntropyPotential} to obtain
\begin{align*}\left[\sum_{k=1}^{d}\frac{\partial F_{k}\left(\vecf{u}\right)}{\partial x_{k}}\right]_{x_{i}}\aligned{\approx}\frac{1}{\fnt{M}_{ii}}\sum_{j}^{ }\left\lVert\fnt{n}_{ij}\right\rVert\left(\fnt{v}_{j}^{T}\vecf{f}_{ij}\left(\fnt{\theta}_{ij}\right)-\psi\left(\fnt{u}_{j}\right)\cdot\hat{\fnt{n}}_{ij}\right),\\
\aligned{=}\frac{1}{\fnt{M}_{ii}}\sum_{j}^{ }\left\lVert\fnt{n}_{ij}\right\rVert\left(\fnt{v}_{j}^{T}\vecf{f}_{ij}\left(\fnt{\theta}_{ij}\right)-\left(\psi\left(\fnt{u}_{j}\right)-\psi\left(\fnt{u}_{i}\right)\right)\cdot\hat{\fnt{n}}_{ij}\right),\labell{eq:PsiDiff}\end{align*}
where the key step in equation \eqref{eq:PsiDiff} is due to the fact that $\fnt{D}_{k}\fnt{1}=\fnt{0}$. At this point, the entropy inequality \eqref{eq:EntropyInequality} evaluated at node $x_{i}$ can be discretized as follows:
\begin{align*}\fnt{v}_{i}^{T}\frac{\text{d}\fnt{u}_{i}}{\text{d}t}+\frac{1}{\fnt{M}_{ii}}\sum_{j}^{ }\left\lVert\fnt{n}_{ij}\right\rVert\left(\fnt{v}_{j}^{T}\vecf{f}_{ij}\left(\fnt{\theta}_{ij}\right)-\left(\psi\left(\fnt{u}_{j}\right)-\psi\left(\fnt{u}_{i}\right)\right)\cdot\hat{\fnt{n}}_{ij}\right)\aligned{\le}0,\\
\fnt{v}_{i}^{T}\left(-\frac{1}{\fnt{M}_{ii}}\sum_{j}^{ }\left\lVert\fnt{n}_{ij}\right\rVert\vecf{f}_{ij}\left(\fnt{\theta}_{ij}\right)\right)+\frac{1}{\fnt{M}_{ii}}\sum_{j}^{ }\left\lVert\fnt{n}_{ij}\right\rVert\left(\fnt{v}_{j}^{T}\vecf{f}_{ij}\left(\fnt{\theta}_{ij}\right)-\left(\psi\left(\fnt{u}_{j}\right)-\psi\left(\fnt{u}_{i}\right)\right)\cdot\hat{\fnt{n}}_{ij}\right)\aligned{\le}0,\tag{$\text{by}\ \eqref{eq:BlendedScheme}$}\\
\sum_{j}^{ }\left\lVert\fnt{n}_{ij}\right\rVert\left[\left(\fnt{v}_{j}-\fnt{v}_{i}\right)^{T}\vecf{f}_{ij}\left(\fnt{\theta}_{ij}\right)-\left(\psi\left(\fnt{u}_{j}\right)-\psi\left(\fnt{u}_{i}\right)\right)\cdot\hat{\fnt{n}}_{ij}\right]\aligned{\le}0.\tag{\text{$\fnt{M}_{ii}>0$}}\end{align*}
Then, directly plugging in the definition of $\vecf{f}_{ij}\left(\fnt{\theta}_{ij}\right)$ from \eqref{eq:AVFlux}, we obtain:
\begin{align*}\sum_{j}^{ }\left\lVert\fnt{n}_{ij}\right\rVert\left[\left(\fnt{v}_{j}-\fnt{v}_{i}\right)^{T}\left(\vecf{f}_{ij}^{H}+\fnt{\theta}_{ij}\left(\fnt{u}_{i}-\fnt{u}_{j}\right)\right)-\left(\psi\left(\fnt{u}_{j}\right)-\psi\left(\fnt{u}_{i}\right)\right)\cdot\hat{\fnt{n}}_{ij}\right]\aligned{\le}0,\\
\sum_{j}^{ }\left\lVert\fnt{n}_{ij}\right\rVert\left[\left(\fnt{v}_{j}-\fnt{v}_{i}\right)^{T}\vecf{f}_{ij}^{H}-\left(\psi\left(\fnt{u}_{j}\right)-\psi\left(\fnt{u}_{i}\right)\right)\cdot\hat{\fnt{n}}_{ij}\right]-\sum_{j}^{ }\left\lVert\fnt{n}_{ij}\right\rVert\left[\left(\fnt{v}_{j}-\fnt{v}_{i}\right)^{T}\left(\fnt{u}_{j}-\fnt{u}_{i}\right)\right]\fnt{\theta}_{ij}\aligned{\le}0,\\
\fnt{a}_{i}^{T}\fnt{\theta}_{i}\ge b_{i},\labell{eq:DiscreteEI}\end{align*}
where 
\begin{align*}\left(\fnt{a}_{i}\right)_{j}=\fnt{a}_{ij}\aligned{=}\left\lVert\fnt{n}_{ij}\right\rVert\left[\left(\fnt{v}_{j}-\fnt{v}_{i}\right)^{T}\left(\fnt{u}_{j}-\fnt{u}_{i}\right)\right],\\
b_{i}\aligned{=}\sum_{j}^{ }\left\lVert\fnt{n}_{ij}\right\rVert\left[\left(\fnt{v}_{j}-\fnt{v}_{i}\right)^{T}\vecf{f}_{ij}^{H}-\left(\psi\left(\fnt{u}_{j}\right)-\psi\left(\fnt{u}_{i}\right)\right)\cdot\hat{\fnt{n}}_{ij}\right],\end{align*}
and $\left(\fnt{\theta}_{i}\right)_{j}=\fnt{\theta}_{ij}$. Note in this case, $\fnt{a}_{i}$ can be interpreted as the entropy dissipation at node $x_{i}$, and $b_{i}$ as the nodal entropy violation of the high order method at node $x_{i}$. That is, $b_{i}\le0$ corresponds to when the high order method satisfies the discrete entropy inequality at node $x_{i}$, and $b_{i}>0$ corresponds to a violation of the discrete entropy inequality by the high order method. 

The discrete entropy inequality \eqref{eq:DiscreteEI} allows one to determine diffusion coefficients $\fnt{\theta}_{ij}$ such that the resulting scheme \eqref{eq:BlendedScheme} satisfies the semi-discrete entropy inequality. In the next lemma, we argue that such a discrete entropy inequality is feasible.

\begin{lemma}
\label{lemma:Feasibility}
Assume the finite difference solution $\fnt{u}$ is admissible, such that $\eta$ is strictly convex at each $\fnt{u}_i$. Then, there exists $\fnt{\theta}_{i}\ge0$ such that $\fnt{a}_{i}^{T}\fnt{\theta}_{i}\ge b_{i}$.
\end{lemma}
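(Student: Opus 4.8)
The plan is to dispatch the statement by a case split on the sign of $b_i$, using strict convexity of $\eta$ only to pin down the sign structure of the vector $\fnt{a}_i$.

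First I would record the key consequence of strict convexity: since $\eta$ is strictly convex at each node, the gradient map $\fnt{u}\mapsto\fnt{v}=\nabla_{\fnt{u}}\eta$ is strictly monotone, so that $\left(\fnt{v}_j-\fnt{v}_i\right)^T\left(\fnt{u}_j-\fnt{u}_i\right)\ge0$, with equality precisely when $\fnt{u}_i=\fnt{u}_j$. Combined with $\left\lVert\fnt{n}_{ij}\right\rVert\ge0$, this shows every entry $\fnt{a}_{ij}=\left\lVert\fnt{n}_{ij}\right\rVert\left[\left(\fnt{v}_j-\fnt{v}_i\right)^T\left(\fnt{u}_j-\fnt{u}_i\right)\right]$ is nonnegative. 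If $b_i\le0$, then $\fnt{\theta}_i=\fnt{0}$ already satisfies $\fnt{a}_i^T\fnt{\theta}_i=0\ge b_i$, so we may assume $b_i>0$.

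For the case $b_i>0$, the claim is that $\fnt{a}_i\ne\fnt{0}$, i.e.\ some $\fnt{a}_{ij^\star}>0$; granting this, the choice $\fnt{\theta}_{ij^\star}=b_i/\fnt{a}_{ij^\star}>0$ and $\fnt{\theta}_{ij}=0$ for $j\ne j^\star$ (or, just as well, the uniform choice $\fnt{\theta}_{ij}=b_i/\sum_k\fnt{a}_{ik}$) gives $\fnt{a}_i^T\fnt{\theta}_i=b_i\ge b_i$ with $\fnt{\theta}_i\ge0$, finishing the proof. To establish the claim I would argue by contraposition: suppose $\fnt{a}_{ij}=0$ for all $j$. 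For each $j$ with $\left\lVert\fnt{n}_{ij}\right\rVert\ne0$ the equality case above forces $\fnt{u}_j=\fnt{u}_i$, hence $\fnt{v}_j=\fnt{v}_i$, $\psi(\fnt{u}_j)=\psi(\fnt{u}_i)$, and by consistency of the central flux $\vecf{f}_{ij}^H=\vecf{f}^{\text{central}}(\fnt{u}_i,\fnt{u}_i,\hat{\fnt{n}}_{ij})=\vecf{f}(\fnt{u}_i)\cdot\hat{\fnt{n}}_{ij}$; substituting into the definition of $b_i$, the $j$-th summand equals $\left\lVert\fnt{n}_{ij}\right\rVert\bigl[(\fnt{v}_j-\fnt{v}_i)^T\vecf{f}_{ij}^H-(\psi(\fnt{u}_j)-\psi(\fnt{u}_i))\cdot\hat{\fnt{n}}_{ij}\bigr]=0$. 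Terms with $\left\lVert\fnt{n}_{ij}\right\rVert=0$ vanish trivially. Hence $b_i=0$, so $b_i>0$ implies $\fnt{a}_i\ne\fnt{0}$.

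The only nontrivial step is this last claim — that the discrete entropy dissipation capacity $\fnt{a}_i$ at node $x_i$ cannot be identically zero whenever the high order method genuinely violates the entropy inequality there ($b_i>0$) — and it is exactly where the admissibility/strict convexity hypothesis is essential: it links the vanishing of $\fnt{a}_{ij}$ to $\fnt{u}_i=\fnt{u}_j$, which in turn collapses the high order flux to the consistent one and drives $b_i$ to zero. The remaining verifications (nonnegativity of the $\fnt{a}_{ij}$, the $b_i\le0$ case, and the explicit choice of $\fnt{\theta}_i$) are routine.
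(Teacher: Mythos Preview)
Your proof is correct and follows essentially the same approach as the paper: both establish $\fnt{a}_{ij}\ge0$ from strict convexity of $\eta$, handle $b_i\le0$ with $\fnt{\theta}_i=\fnt{0}$, and for $b_i>0$ argue that some $\fnt{a}_{ij}>0$ by observing that a nonzero summand in $b_i$ forces $\fnt{u}_i\ne\fnt{u}_j$. The only cosmetic differences are that the paper argues this last point directly (a positive summand in $b_i$ implies $\fnt{u}_i\ne\fnt{u}_j$) while you phrase it as a contraposition, and the paper exhibits the feasible point $\fnt{\theta}_i=\frac{b_i}{\fnt{a}_i^T\fnt{a}_i}\fnt{a}_i$ (which is in fact the optimizer used in the subsequent lemma) whereas you use a single nonzero coordinate.
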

\begin{proof}
The entropy projection matrix $\left(\frac{\text{d}\fnt{u}}{\text{d}\fnt{v}}\right)_{\fnt{u}}\in\mathbb{R}^{v\times v}$, evaluated at any admissible intermediate state between $\fnt{u}_{i}$ and $\fnt{u}_{j}$, is symmetric positive definite. Therefore, if we consider $\fnt{a}_{ij}$:
\begin{align*}\fnt{a}_{ij}\aligned{=}\left\lVert\fnt{n}_{ij}\right\rVert\left[\left(\fnt{v}_{j}-\fnt{v}_{i}\right)^{T}\left(\fnt{u}_{j}-\fnt{u}_{i}\right)\right],\\
\aligned{=}\left\lVert\fnt{n}_{ij}\right\rVert\left[\left(\fnt{v}_{j}-\fnt{v}_{i}\right)^{T}\left(\frac{\text{d}\fnt{u}}{\text{d}\fnt{v}}\right)_{\fnt{u}}\left(\fnt{v}_{j}-\fnt{v}_{i}\right)\right],\tag{$\text{chain rule}$}\\
\aligned{\ge}0.\end{align*}
In the case where $b_{i}\le0$, $\fnt{\theta}_{i}=\fnt{0}$ is feasible for \eqref{eq:DiscreteEI}. Suppose $b_{i}>0$. In this case, since $b_{i}$ is nonzero, there must be some $j$ for which $\left\lVert\fnt{n}_{ij}\right\rVert\left[\left(\fnt{v}_{j}-\fnt{v}_{i}\right)^{T}\vecf{f}_{ij}^{H}-\left(\psi\left(\fnt{u}_{j}\right)-\psi\left(\fnt{u}_{i}\right)\right)\cdot\hat{\fnt{n}}_{ij}\right]>0$. Thus, we must also have that $\fnt{u}_{i}\ne\fnt{u}_{j}$. Therefore, $\fnt{a}_{ij}>0$, and so $\fnt{a}_{i}\ne\fnt{0}$. Thus, $\fnt{a}_{i}^{T}\fnt{a}_{i}>0$. Fixing $\lambda=\frac{b_{i}}{\fnt{a}_{i}^{T}\fnt{a}_{i}}>0$, we see that $\fnt{\theta}_{i}=\lambda\fnt{a}_{i}\ge0$ satisfies $\fnt{a}_{i}^{T}\fnt{\theta}_{i}=b_{i}$. 
\end{proof}
Consider the form of $\vecf{f}_{ij}\left(\fnt{\theta}_{ij}\right)$ from \eqref{eq:AVFlux}. Recall, $\fnt{\theta}_{ij}$ is proportional to dissipation added to $\vecf{f}_{ij}^{H}$. So, we seek to minimize $\fnt{\theta}_{i}$ in some sense while ensuring that it satisfies $\fnt{a}_{i}^{T}\fnt{\theta}_{i}\ge b_{i}$ and $\fnt{\theta}_{i}\ge0$. To do this, we will set the \textit{pre-symmetrized} diffusion coefficients $\hat{\fnt{\theta}}_{i}$ as the solution to the optimization problem:
\begin{align*}\min_{\begin{matrix}\fnt{a}_{i}^{T}\hat{\fnt{\theta}}\ge b_{i}\\
\hat{\fnt{\theta}}\ge0\end{matrix}}\hat{\fnt{\theta}}^{T}\hat{\fnt{\theta}}.\labell{eq:Knapsack}\end{align*}
Note that, in order to preserve conservation (see Section \eqref{Ensuring Conservation} for more details), the diffusion coefficients must undergo an additional symmetrization procedure before being applied to the blended scheme \eqref{eq:BlendedScheme}.

The following lemma will show that such a problem has an explicit form for its optimal solution.

\begin{lemma}
\label{lemma:Knapsack Solution}
The problem \eqref{eq:Knapsack} is feasible and has optimal solution $\hat{\fnt{\theta}}=\frac{b_{i}\fnt{a}_{i}}{\fnt{a}_{i}^{T}\fnt{a}_{i}}$ if $b_{i}>0$, and $\hat{\fnt{\theta}}=\fnt{0}$ if $b_{i}\le0$. 
\end{lemma}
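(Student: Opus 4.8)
The plan is to recognize \eqref{eq:Knapsack} as a convex quadratic program with a single linear inequality constraint plus nonnegativity bounds, and to verify the claimed closed form directly rather than invoking KKT machinery. First I would dispose of the easy case: if $b_i \le 0$, then $\hat{\fnt{\theta}} = \fnt{0}$ is feasible (since $\fnt{a}_i^T \fnt{0} = 0 \ge b_i$ and $\fnt{0} \ge 0$), and it trivially minimizes the nonnegative objective $\hat{\fnt{\theta}}^T\hat{\fnt{\theta}} \ge 0$, so it is optimal. This also re-uses the observation from the proof of Lemma~\ref{lemma:Feasibility}.

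For the main case $b_i > 0$, I would first record that $\fnt{a}_i \ge 0$ entrywise (shown in Lemma~\ref{lemma:Feasibility} via the symmetric positive definiteness of the entropy Jacobian) and that $\fnt{a}_i \ne \fnt{0}$, so $\fnt{a}_i^T\fnt{a}_i > 0$ and the candidate $\hat{\fnt{\theta}}^\star := \frac{b_i \fnt{a}_i}{\fnt{a}_i^T \fnt{a}_i}$ is well defined, nonnegative (product of the positive scalar $b_i/(\fnt{a}_i^T\fnt{a}_i)$ with the nonnegative vector $\fnt{a}_i$), and satisfies $\fnt{a}_i^T \hat{\fnt{\theta}}^\star = b_i$, hence is feasible. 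To show optimality, I would take an arbitrary feasible $\hat{\fnt{\theta}}$ (so $\hat{\fnt{\theta}} \ge 0$ and $\fnt{a}_i^T\hat{\fnt{\theta}} \ge b_i$) and bound its objective from below. The cleanest route is Cauchy--Schwarz: $\|\hat{\fnt{\theta}}\|\,\|\fnt{a}_i\| \ge \fnt{a}_i^T\hat{\fnt{\theta}} \ge b_i > 0$, which gives $\|\hat{\fnt{\theta}}\|^2 \ge b_i^2 / (\fnt{a}_i^T\fnt{a}_i) = \|\hat{\fnt{\theta}}^\star\|^2$. Thus no feasible point beats $\hat{\fnt{\theta}}^\star$, so it is the (unique, by strict convexity) minimizer. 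Note the nonnegativity constraint is never binding here — it is automatically satisfied by the scaled-residual direction because $\fnt{a}_i \ge 0$ — so it plays no role beyond guaranteeing consistency with \eqref{eq:DiscreteEI}.

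The only mild subtlety, and the step I would be most careful about, is making sure the case analysis is exhaustive and that feasibility is genuinely established in the $b_i > 0$ case: one must invoke that $\fnt{a}_i \ne \fnt{0}$, which as argued in Lemma~\ref{lemma:Feasibility} follows from admissibility (strict convexity of $\eta$) together with the fact that $b_i > 0$ forces $\fnt{u}_i \ne \fnt{u}_j$ for at least one $j$. Everything else is a one-line Cauchy--Schwarz estimate, so I do not anticipate any real obstacle; the lemma is essentially a sanity check that the constrained optimum coincides with the naive "scale the residual" choice used implicitly in the feasibility proof.
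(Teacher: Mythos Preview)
Your proposal is correct and follows essentially the same approach as the paper: both handle $b_i\le 0$ trivially, and for $b_i>0$ both identify $\hat{\fnt{\theta}}^\star=\frac{b_i\fnt{a}_i}{\fnt{a}_i^T\fnt{a}_i}$ as the minimum-norm point satisfying $\fnt{a}_i^T\hat{\fnt{\theta}}\ge b_i$ and then use $\fnt{a}_i\ge 0$ (from Lemma~\ref{lemma:Feasibility}) to conclude the nonnegativity constraint is inactive. The only presentational difference is that the paper frames the $b_i>0$ case as a relaxation argument (drop $\hat{\fnt{\theta}}\ge 0$, cite the pseudoinverse solution as well known, then check feasibility), whereas you prove the same minimum-norm fact inline via Cauchy--Schwarz; your version is slightly more self-contained but otherwise identical in content.
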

\begin{proof}
By Lemma \eqref{lemma:Feasibility}, the optimization problem \eqref{eq:Knapsack} is feasible. If $b_{i}\le0$, then, $\hat{\fnt{\theta}}=\fnt{0}$ is feasible. Moreover, $\fnt{0}$ optimizes the objective $\hat{\fnt{\theta}}^{T}\hat{\fnt{\theta}}=\left\lVert\hat{\fnt{\theta}}\right\rVert^{2}$ over its whole domain. Therefore, $\hat{\fnt{\theta}}=\fnt{0}$ is optimal when $b_{i}\le0$. 

Suppose $b_{i}>0$. Consider the relaxed form of the optimization problem \eqref{eq:Knapsack}:
\begin{align*}\min_{\fnt{a}_{i}^{T}\hat{\fnt{\theta}}\ge b_{i}}\hat{\fnt{\theta}}^{T}\hat{\fnt{\theta}}.\labell{eq:Relaxed}\end{align*}
Such a problem has a well-known explicit solution: $\hat{\fnt{\theta}}_{\ast}=\fnt{a}_{i}^{\dagger}b_{i}=\frac{b_{i}\fnt{a}_{i}}{\fnt{a}_{i}^{T}\fnt{a}_{i}}$. Recall from Lemma \eqref{lemma:Feasibility} that $\fnt{a}_{i}\ge0$. Therefore, we have that $\hat{\fnt{\theta}}_{\ast}\ge0$. Hence, $\hat{\fnt{\theta}}_{\ast}$ is feasible for \eqref{eq:Knapsack}. Since it is optimal for the relaxed form \eqref{eq:Relaxed}, it also optimizes \eqref{eq:Knapsack}. 
\end{proof}

Note that the solution to the optimization problem is cheap to compute. The resulting scheme does not dramatically increase the runtime of standard finite difference methods. 

\subsection{Ensuring Conservation}\label{Ensuring Conservation}

In this section, we discuss a slight restriction on the resulting scheme in order to enforce \textit{conservation}. Conservation is an important property for nonlinear conservation laws which ensures that the variables $\fnt{u}$ remain discretely conserved by our scheme. For periodic boundary conditions, a scheme is called conservative if the discrete volume integral is equal to $\fnt{0}$:
\begin{align*}\sum_{i}^{ }\fnt{M}_{ii}\frac{\text{d}\fnt{u}_{i}}{\text{d}t}=\fnt{0}.\labell{eq:Conservative}\end{align*}
We will also show that the resulting scheme with symmetrized diffusion coefficients is provably entropy stable. For the purposes of conservation, we will require that $\fnt{\theta}_{ij}=\fnt{\theta}_{ji}$. This can be obtained by setting the diffusion coefficients $\fnt{\theta}_{ij}=\max\left\{\hat{\fnt{\theta}}_{ij},\hat{\fnt{\theta}}_{ji}\right\}$ after determining $\hat{\fnt{\theta}}_{ij}$ from \eqref{eq:Knapsack}. The next lemma argues that this procedure does not affect the entropy stability of the resulting scheme. 

\begin{lemma}
\label{lemma:Symmetrization}
The blended scheme \eqref{eq:BlendedScheme} is entropy stable when applied with the diffusion coefficients $\fnt{\theta}_{ij}=\max\left\{\hat{\fnt{\theta}}_{ij},\hat{\fnt{\theta}}_{ji}\right\}$.
\end{lemma}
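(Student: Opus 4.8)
The plan is to show that replacing the pre-symmetrized coefficients $\hat{\fnt{\theta}}_{ij}$ by $\fnt{\theta}_{ij}=\max\{\hat{\fnt{\theta}}_{ij},\hat{\fnt{\theta}}_{ji}\}$ only increases the amount of dissipation added at each node, and hence cannot destroy the entropy inequality \eqref{eq:DiscreteEI}. The key observation is that the nodal entropy inequality is equivalent to $\fnt{a}_i^T\fnt{\theta}_i\ge b_i$ where, by the argument in Lemma~\eqref{lemma:Feasibility}, each coefficient $\fnt{a}_{ij}\ge0$. Since $\hat{\fnt{\theta}}_i$ is feasible for \eqref{eq:Knapsack} by Lemma~\eqref{lemma:Knapsack Solution}, we have $\fnt{a}_i^T\hat{\fnt{\theta}}_i\ge b_i$. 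Now $\fnt{\theta}_{ij}=\max\{\hat{\fnt{\theta}}_{ij},\hat{\fnt{\theta}}_{ji}\}\ge\hat{\fnt{\theta}}_{ij}\ge0$ for every $j$, so termwise $\fnt{a}_{ij}\fnt{\theta}_{ij}\ge\fnt{a}_{ij}\hat{\fnt{\theta}}_{ij}$ because $\fnt{a}_{ij}\ge0$; summing over $j$ gives $\fnt{a}_i^T\fnt{\theta}_i\ge\fnt{a}_i^T\hat{\fnt{\theta}}_i\ge b_i$. Thus $\fnt{\theta}_i$ still satisfies \eqref{eq:DiscreteEI} at node $x_i$.

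The remaining step is to run the derivation preceding \eqref{eq:DiscreteEI} in reverse: the chain of equivalences from the discrete entropy inequality at $x_i$ down to $\fnt{a}_i^T\fnt{\theta}_i\ge b_i$ used only that $\fnt{M}_{ii}>0$ and the definition \eqref{eq:AVFlux} of $\vecf{f}_{ij}(\fnt{\theta}_{ij})$, both of which still hold. Hence $\fnt{v}_i^T\frac{\text{d}\fnt{u}_i}{\text{d}t}+\frac{1}{\fnt{M}_{ii}}\sum_j\lVert\fnt{n}_{ij}\rVert\big(\fnt{v}_j^T\vecf{f}_{ij}(\fnt{\theta}_{ij})-(\psi(\fnt{u}_j)-\psi(\fnt{u}_i))\cdot\hat{\fnt{n}}_{ij}\big)\le0$ for each $i$, which is the semi-discrete entropy inequality, i.e.\ the scheme \eqref{eq:BlendedScheme} is entropy stable. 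I would also remark that symmetry $\fnt{\theta}_{ij}=\fnt{\theta}_{ji}$ holds by construction, so the flux $\vecf{f}_{ij}(\fnt{\theta}_{ij})$ retains the skew-symmetry needed for conservation \eqref{eq:Conservative} (this is really the point of Section~\eqref{Ensuring Conservation}, and worth noting even if it is not strictly part of the entropy-stability claim).

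The only place requiring care, and the main obstacle, is the sign condition $\fnt{a}_{ij}\ge0$: the monotonicity argument "increasing $\fnt{\theta}_{ij}$ preserves the inequality" fails if any $\fnt{a}_{ij}$ could be negative. This is exactly why admissibility of $\fnt{u}$ (strict convexity of $\eta$) is invoked in Lemma~\eqref{lemma:Feasibility}, which guarantees the entropy Jacobian $\left(\frac{\text{d}\fnt{u}}{\text{d}\fnt{v}}\right)_{\fnt{u}}$ is symmetric positive definite and hence $\fnt{a}_{ij}=\lVert\fnt{n}_{ij}\rVert(\fnt{v}_j-\fnt{v}_i)^T\left(\frac{\text{d}\fnt{u}}{\text{d}\fnt{v}}\right)_{\fnt{u}}(\fnt{v}_j-\fnt{v}_i)\ge0$. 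So the proof should open by importing this nonnegativity from Lemma~\eqref{lemma:Feasibility} (implicitly assuming the same admissibility hypothesis), then do the two-line termwise comparison, then cite the reversed derivation of \eqref{eq:DiscreteEI}. No genuinely new computation is needed beyond what Lemmas~\eqref{lemma:Feasibility} and~\eqref{lemma:Knapsack Solution} already provide.
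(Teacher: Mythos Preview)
Your proposal is correct and follows essentially the same approach as the paper's own proof: note that $\fnt{\theta}_{ij}\ge\hat{\fnt{\theta}}_{ij}$ by definition, use $\fnt{a}_i\ge\fnt{0}$ from Lemma~\ref{lemma:Feasibility} to conclude $\fnt{a}_i^T\fnt{\theta}_i\ge\fnt{a}_i^T\hat{\fnt{\theta}}_i\ge b_i$, and hence \eqref{eq:DiscreteEI} holds at every node. Your write-up is more expansive (explicitly reversing the derivation of \eqref{eq:DiscreteEI} and flagging the admissibility hypothesis behind $\fnt{a}_{ij}\ge0$), but the logical content is identical.
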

\begin{proof}
Consider node $x_{i}$. Notice that $\fnt{\theta}_{ij}\ge\hat{\fnt{\theta}}_{ij}$ by definition. Therefore, since $\fnt{a}_{i}\ge0$, it must be the case that $\fnt{a}_{i}^{T}\fnt{\theta}_{i}\ge\fnt{a}_{i}^{T}\hat{\fnt{\theta}}_{i}\ge b_{i}$. Therefore, it satisfies \eqref{eq:DiscreteEI}. Hence, the blended scheme is provably entropy stable.

\end{proof}

In order to satisfy conservation for periodic boundary conditions, it suffices to show that the discrete volume integral is equal to $\fnt{0}$. This next lemma will argue that the imposition of symmetry $\fnt{\theta}_{ij}=\fnt{\theta}_{ji}$ is sufficient for conservation in periodic boundary conditions.

\begin{lemma}
\label{lemma:Periodic Conservation}
The blended scheme \eqref{eq:BlendedScheme} is conservative when applied with the diffusion coefficients $\fnt{\theta}_{ij}$.
\end{lemma}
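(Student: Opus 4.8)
The plan is to show that the sum $\sum_i \fnt{M}_{ii}\frac{\text{d}\fnt{u}_i}{\text{d}t} = -\sum_i \fnt{r}_i$ vanishes by exploiting the skew-symmetry of the flux contributions together with the symmetry $\fnt{\theta}_{ij}=\fnt{\theta}_{ji}$. First I would write out $\sum_i \fnt{r}_i = \sum_i \sum_j \left\lVert\fnt{n}_{ij}\right\rVert \vecf{f}_{ij}\left(\fnt{\theta}_{ij}\right)$ and split the flux $\vecf{f}_{ij}\left(\fnt{\theta}_{ij}\right) = \vecf{f}_{ij}^H + \fnt{\theta}_{ij}\left(\fnt{u}_i - \fnt{u}_j\right)$ using \eqref{eq:AVFlux}. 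The strategy is then to handle each of the two pieces separately: the high order central flux piece, and the graph-Laplacian dissipation piece.

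For the central flux piece, I would recall that $\fnt{n}_{ij,k} = 2\fnt{D}_{k,ij}$ and that the SBP property makes $\fnt{M}\fnt{D}_k$ skew-symmetric (equivalently $\fnt{M}_{ii}\fnt{D}_{k,ij} = -\fnt{M}_{jj}\fnt{D}_{k,ji}$ for the diagonal mass matrix case, so $\left\lVert\fnt{n}_{ij}\right\rVert \hat{\fnt{n}}_{ij} = \fnt{n}_{ij} = -\fnt{n}_{ji}$ up to the mass matrix weighting). Combined with $\vecf{f}^{\text{central}}\left(\fnt{u}_i,\fnt{u}_j,\hat{\fnt{n}}_{ij}\right) = -\vecf{f}^{\text{central}}\left(\fnt{u}_j,\fnt{u}_i,-\hat{\fnt{n}}_{ij}\right)$ — i.e. skew-symmetry \eqref{eq:VolumeFlux}, which the central flux satisfies — the double sum $\sum_{i,j}\left\lVert\fnt{n}_{ij}\right\rVert\vecf{f}_{ij}^H$ cancels pairwise: swapping the roles of $i$ and $j$ negates each term. (I should be careful here about whether the mass matrix weighting enters; since $\fnt{M}$ is a single global diagonal matrix for uniform grids, $\fnt{M}_{ii} = \fnt{M}_{jj}$, so $\fnt{D}_k$ itself is skew-symmetric and $\fnt{n}_{ij} = -\fnt{n}_{ji}$ directly.) For the dissipation piece, $\sum_{i,j}\left\lVert\fnt{n}_{ij}\right\rVert\fnt{\theta}_{ij}\left(\fnt{u}_i - \fnt{u}_j\right)$: since $\fnt{\theta}_{ij} = \fnt{\theta}_{ji}$ and $\left\lVert\fnt{n}_{ij}\right\rVert = \left\lVert\fnt{n}_{ji}\right\rVert$, the coefficient is symmetric in $(i,j)$ while $\fnt{u}_i - \fnt{u}_j$ is antisymmetric, so this double sum also vanishes pairwise.

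The main obstacle — really the only subtle point — is getting the bookkeeping of the mass matrix right in the skew-symmetry argument for the central flux term, and making sure the antisymmetry $\fnt{n}_{ij} = -\fnt{n}_{ji}$ is correctly invoked (it follows from $\fnt{D}_k$ being an SBP first-derivative operator with diagonal norm on a periodic uniform grid, so $\fnt{M}\fnt{D}_k + \fnt{D}_k^T\fnt{M} = 0$ and hence $\fnt{D}_k^T = -\fnt{D}_k$ when $\fnt{M}$ is a scalar multiple of the identity; otherwise one argues with the $\fnt{M}$-weighted inner product). Everything else is a one-line pairwise-cancellation argument. I would close by noting both pieces sum to zero, hence $\sum_i \fnt{r}_i = \fnt{0}$, which gives \eqref{eq:Conservative}.

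\begin{proof}
By the blended scheme \eqref{eq:BlendedScheme}, $\sum_{i}\fnt{M}_{ii}\frac{\text{d}\fnt{u}_{i}}{\text{d}t} = -\sum_{i}\fnt{r}_{i} = -\sum_{i}\sum_{j}\left\lVert\fnt{n}_{ij}\right\rVert\vecf{f}_{ij}\left(\fnt{\theta}_{ij}\right)$. Using \eqref{eq:AVFlux}, split this into
\begin{align*}
\sum_{i}\sum_{j}\left\lVert\fnt{n}_{ij}\right\rVert\vecf{f}_{ij}^{H} + \sum_{i}\sum_{j}\left\lVert\fnt{n}_{ij}\right\rVert\fnt{\theta}_{ij}\left(\fnt{u}_{i}-\fnt{u}_{j}\right).
\end{align*}
For the first term, since $\fnt{n}_{ij,k}=2\fnt{D}_{k,ij}$ and $\fnt{D}_{k}$ is an SBP operator with diagonal norm $\fnt{M}$ equal to a scalar multiple of the identity on a periodic uniform grid, we have $\fnt{D}_{k}^{T}=-\fnt{D}_{k}$, hence $\fnt{n}_{ij}=-\fnt{n}_{ji}$ and $\left\lVert\fnt{n}_{ij}\right\rVert=\left\lVert\fnt{n}_{ji}\right\rVert$, $\hat{\fnt{n}}_{ij}=-\hat{\fnt{n}}_{ji}$. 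Since $\vecf{f}_{ij}^{H}=\vecf{f}^{\text{central}}\left(\fnt{u}_{i},\fnt{u}_{j},\hat{\fnt{n}}_{ij}\right)$ satisfies skew-symmetry \eqref{eq:VolumeFlux}, swapping the summation indices $i\leftrightarrow j$ gives $\left\lVert\fnt{n}_{ji}\right\rVert\vecf{f}_{ji}^{H}=\left\lVert\fnt{n}_{ij}\right\rVert\left(-\vecf{f}_{ij}^{H}\right)$, so the double sum equals its own negative and therefore vanishes. For the second term, $\fnt{\theta}_{ij}=\fnt{\theta}_{ji}$ by construction and $\left\lVert\fnt{n}_{ij}\right\rVert=\left\lVert\fnt{n}_{ji}\right\rVert$, so the coefficient $\left\lVert\fnt{n}_{ij}\right\rVert\fnt{\theta}_{ij}$ is symmetric in $(i,j)$ while $\fnt{u}_{i}-\fnt{u}_{j}=-\left(\fnt{u}_{j}-\fnt{u}_{i}\right)$ is antisymmetric; again swapping indices negates each term, so this double sum also vanishes. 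Hence $\sum_{i}\fnt{r}_{i}=\fnt{0}$, and therefore $\sum_{i}\fnt{M}_{ii}\frac{\text{d}\fnt{u}_{i}}{\text{d}t}=\fnt{0}$, which is \eqref{eq:Conservative}.
\end{proof}
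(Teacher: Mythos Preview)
Your proof is correct and uses essentially the same ingredients as the paper: skew-symmetry of $\fnt{D}_{k}$ under periodic boundary conditions (hence $\fnt{n}_{ij}=-\fnt{n}_{ji}$), symmetry $\fnt{\theta}_{ij}=\fnt{\theta}_{ji}$, and a pairwise-cancellation argument on the double sum. The only difference is cosmetic: the paper observes in one line that these facts give $\vecf{f}_{ij}\left(\fnt{\theta}_{ij}\right)=-\vecf{f}_{ji}\left(\fnt{\theta}_{ji}\right)$ for the full blended flux and then cancels directly, whereas you split $\vecf{f}_{ij}\left(\fnt{\theta}_{ij}\right)$ into its central and dissipative parts and cancel each separately.
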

\begin{proof}

Note, in the context of periodic boundary conditions, $\fnt{D}_{k}$ is skew-symmetric. Thus, $\fnt{n}_{ij}=-\fnt{n}_{ji}$. Furthermore, since $\fnt{\theta}_{ij}=\fnt{\theta}_{ji}$ we have from \eqref{eq:AVFlux} that $\vecf{f}_{ij}\left(\fnt{\theta}_{ij}\right)=-\vecf{f}_{ji}\left(\fnt{\theta}_{ji}\right)$. 'Therefore,
\begin{align*}\sum_{i}^{ }\fnt{M}_{ii}\frac{\text{d}\fnt{u}_{i}}{\text{d}t}\aligned{=}-\sum_{i}^{ }\sum_{j}^{ }\left\lVert\fnt{n}_{ij}\right\rVert\vecf{f}_{ij}\left(\fnt{\theta}_{ij}\right),\tag{$\text{by}\ \eqref{eq:BlendedScheme}$}\\
\aligned{=}\sum_{i}^{ }\sum_{j}^{ }\left\lVert-\fnt{n}_{ji}\right\rVert\vecf{f}_{ji}\left(\fnt{\theta}_{ji}\right),\\
\aligned{=}-\sum_{i}^{ }\fnt{M}_{ii}\frac{\text{d}\fnt{u}_{i}}{\text{d}t}.\end{align*}
Therefore, $\sum_{i}^{ }\fnt{M}_{ii}\frac{\text{d}\fnt{u}_{i}}{\text{d}t}=\fnt{0}$. Therefore, by \eqref{eq:Conservative}, the scheme \eqref{eq:BlendedScheme} is conservative.
\end{proof}

\subsection{Weakly-Enforced Boundary Conditions}\label{Weakly-Enforced Boundary Conditions}

In this section, we discuss the arising discretization when enforcing constant-in-time Dirichlet boundary conditions weakly. Note that strong boundary conditions may be enforced as well \cite{Svard25}. In order to do this, we utilize the SBP property that $\fnt{D}_{k}+\fnt{D}_{k}^{T}=\fnt{B}_{k}$, where $\fnt{B}_{k}$ is a diagonal boundary matrix. Recall the flux differencing technique taken in \eqref{High Order Finite Difference Methods}. Applying a similar flux differencing technique, we obtain:

\begin{align*}\sum_{k=1}^{d}\left(\fnt{D}_{k}\vecf{f}_{k}\left(\fnt{u}\right)\right)_{i}\aligned{=}\sum_{k=1}^{d}\sum_{j}^{ }2\fnt{D}_{k,ij}\left(\frac{\vecf{f}_{k}\left(\fnt{u}_{i}\right)+\vecf{f}_{k}\left(\fnt{u}_{j}\right)}{2}\right),\\
\aligned{=}\sum_{k=1}^{d}\sum_{j}^{ }\left(\fnt{B}_{k,ij}+\fnt{D}_{k,ij}-\fnt{D}_{k,ij}^{T}\right)\cdot\left(\frac{\vecf{f}\left(\fnt{u}_{i}\right)+\vecf{f}\left(\fnt{u}_{j}\right)}{2}\right),\labell{eq:FluxDiffL2}\\
\aligned{=}\sum_{j}^{ }\left\lVert\fnt{n}_{ij}\right\rVert\cdot\left(\frac{\vecf{f}\left(\fnt{u}_{i}\right)+\vecf{f}\left(\fnt{u}_{j}\right)}{2}\cdot\frac{\fnt{n}_{ij}}{\left\lVert\fnt{n}_{ij}\right\rVert}\right)+\left\lVert\fnt{b}_{i}\right\rVert\left(\vecf{f}\left(\fnt{u}_{i}\right)\cdot\frac{\fnt{b}_{i}}{\left\lVert\fnt{b}_{i}\right\rVert}\right),\end{align*}
where $\fnt{n}_{ij},\fnt{b}_{i}\in\mathbb{R}^{d}$ such that $\fnt{n}_{ij,k}=\left(\fnt{D}_{k}-\fnt{D}_{k}^{T}\right)_{ij}$ and $\fnt{b}_{i,k}=\fnt{B}_{k,ii}$ are outward unit normals scaled by the surface quadrature weights and surface Jacobian factor. The relation on \eqref{eq:FluxDiffL2} arises from the property that $2\fnt{D}_{k}=\fnt{B}_{k}+\fnt{D}_{k}-\fnt{D}_{k}^{T}$. Therefore, the high order method \eqref{eq:FluxDiff} and the blended scheme \eqref{eq:BlendedScheme} can alternatively be discretized by
\begin{align*}\fnt{M}\frac{\text{d}\fnt{u}_{i}^{H}}{\text{d}t}+\sum_{j}^{ }\left\lVert\fnt{n}_{ij}\right\rVert\vecf{f}_{ij}^{H}=-\left\lVert\fnt{b}_{i}\right\rVert\left(\vecf{f}^{\text{surf}}\left(\fnt{u}_{i},\fnt{u}_{i}^{\text{BC}},\frac{\fnt{b}_{i}}{\left\lVert\fnt{b}_{i}\right\rVert}\right)\right),\labell{eq:wbcFluxDiff}\\
\fnt{M}\frac{\text{d}\fnt{u}_{i}}{\text{d}t}+\sum_{j}^{ }\left\lVert\fnt{n}_{ij}\right\rVert\vecf{f}_{ij}\left(\fnt{\theta}_{ij}\right)=-\left\lVert\fnt{b}_{i}\right\rVert\left(\vecf{f}^{\text{surf}}\left(\fnt{u}_{i},\fnt{u}_{i}^{\text{BC}},\frac{\fnt{b}_{i}}{\left\lVert\fnt{b}_{i}\right\rVert}\right)\right),\labell{eq:wbcBlendedScheme}\end{align*}
where $\vecf{f}^{\text{surf}}$ is a chosen surface flux. The choice of the surface flux is arbitrary, but it must be entropy stable, as in \eqref{eq:EntropyStable}. By applying the same procedure in Section \eqref{Discrete Nodal Entropy Stability}, one arrives at the alternative formulation of a discrete entropy inequality:
\begin{align*}\aligned{ }\left\lVert\fnt{b}_{i}\right\rVert\left(-\fnt{v}_{i}^{T}\vecf{f}^{\text{surf}}\left(\fnt{u}_{i},\fnt{u}_{i}^{\text{BC}},\frac{\fnt{b}_{i}}{\left\lVert\fnt{b}_{i}\right\rVert}\right)-\psi\left(\fnt{u}_{i}\right)\cdot\frac{\fnt{b}_{i}}{\left\lVert\fnt{b}_{i}\right\rVert}\right)\labell{eq:GrossBoundaryES}\\
\aligned{+}\sum_{j}^{ }\left\lVert\fnt{n}_{ij}\right\rVert\left[\left(\fnt{v}_{j}-\fnt{v}_{i}\right)^{T}\vecf{f}_{ij}\left(\fnt{\theta}_{ij}\right)-\left(\psi\left(\fnt{u}_{j}\right)-\psi\left(\fnt{u}_{i}\right)\right)\cdot\hat{\fnt{n}}_{ij}\right]\le0.\labell{eq:wbcDiscreteEI}\end{align*}
Under appropriate boundary conditions, the first term \eqref{eq:GrossBoundaryES} can be made entropy stable \cite{Chen17, Svard21}. Therefore, a sufficient condition for semi-discrete entropy stability is the entropy inequality on \eqref{eq:wbcDiscreteEI}. Observe that this inequality is equivalent in form to that in \eqref{eq:DiscreteEI}. That is, satisfying the discrete entropy inequality \eqref{eq:DiscreteEI} results in an entropy stable scheme, even in the context of weakly enforced boundary conditions. 

In the case of nonperiodic boundary conditions, conservation is implied by the discrete volume integral being equal to the discrete surface integral of the flux function:
\begin{align*}\sum_{i}^{ }\fnt{M}_{ii}\frac{\text{d}\fnt{u}_{i}}{\text{d}t}=-\sum_{i}^{ }\left\lVert\fnt{b}_{i}\right\rVert\left(\vecf{f}^{\text{surf}}\left(\fnt{u}_{i},\fnt{u}_{i}^{\text{BC}},\frac{\fnt{b}_{i}}{\left\lVert\fnt{b}_{i}\right\rVert}\right)\right).\end{align*}
In order to prove the resulting scheme is conservative, it suffices to show that the discrete integral is equal to the boundary integral of the fluxes:
\begin{align*}\aligned{ }\sum_{i}^{ }\fnt{M}_{ii}\frac{\text{d}\fnt{u}_{i}}{\text{d}t},\\
\aligned{=}-\sum_{i}^{ }\sum_{j}^{ }\left\lVert\fnt{n}_{ij}\right\rVert\vecf{f}_{ij}\left(\fnt{\theta}_{ij}\right)-\sum_{i}^{ }\left\lVert\fnt{b}_{i}\right\rVert\left(\vecf{f}^{\text{surf}}\left(\fnt{u}_{i},\fnt{u}_{i}^{\text{BC}},\frac{\fnt{b}_{i}}{\left\lVert\fnt{b}_{i}\right\rVert}\right)\right),\tag{$\text{by}\ \eqref{eq:BlendedScheme}$}\\
\aligned{=}-\sum_{i}^{ }\left\lVert\fnt{b}_{i}\right\rVert\left(\vecf{f}^{\text{surf}}\left(\fnt{u}_{i},\fnt{u}_{i}^{\text{BC}},\frac{\fnt{b}_{i}}{\left\lVert\fnt{b}_{i}\right\rVert}\right)\right).\tag{$\text{see}\ \eqref{lemma:Periodic Conservation}$}\end{align*}
\section{Equivalence with Knapsack Limiting and Positivity Preservation}\label{Equivalence with Knapsack Limiting and Positivity Preservation}

In this section, we will tie a connection between knapsack limiting and the ECAV-FD scheme, and we will discuss a strategy for enforcing positivity preservation into the proposed scheme. To do this, we will assume that the chosen timestepper is strong stability preserving (SSP). The update of an SSP timestepper can be expressed as a convex combination of forward Euler updates, which combines the benefits of positivity preservation from forward Euler timestepping with low order methods, with the high order time converges of Runge-Kutta methods. Consider the low order method defined as follows:
\begin{align*}\fnt{M}\frac{\text{d}\fnt{u}^{L}}{\text{d}t}+\fnt{r}^{L}\aligned{=}\fnt{0},\\
\fnt{r}_{i}^{L}\aligned{=}\sum_{j}^{ }\left\lVert\fnt{n}_{ij}\right\rVert\vecf{f}_{ij}^{L},\labell{eq:LOM}\\
\vecf{f}_{ij}^{L}\aligned{=}\vecf{f}^{\text{LxF}}\left(\fnt{u}_{i},\fnt{u}_{j},\frac{\fnt{n}_{ij}}{\left\lVert\fnt{n}_{ij}\right\rVert}\right).\labell{eq:LowOrderFlux}\end{align*}
In order to enforce constant-in-time Dirichlet boundary conditions, one may replace the right-hand-side with the appropriate boundary term. The resulting low order method is provably positivity preserving for the compressible Euler and Navier-Stokes equations under appropriate choices of the timestep \cite{Lin23}. Notice:
\begin{align*}\vecf{f}_{ij}^{L}\aligned{=}\vecf{f}_{ij}^{H}+\left(\vecf{f}_{ij}^{L}-\vecf{f}_{ij}^{H}\right),\tag{$\text{see}\ \eqref{eq:LowOrderFlux}\ \text{and}\ \eqref{eq:CentralFlux}$}\\
\aligned{=}\vecf{f}_{ij}^{H}+\frac{\lambda_{ij}}{2}\left(\fnt{u}_{i}-\fnt{u}_{j}\right),\tag{$\text{by}\ \eqref{eq:LxF}$}\\
\aligned{=}\vecf{f}_{ij}\left(\frac{\lambda_{ij}}{2}\right),\end{align*}
where $\lambda_{ij}$ is defined as in \eqref{eq:LxF}. We can see from the definition above that there exists a set of diffusion coefficients such that the scheme \eqref{eq:wbcBlendedScheme} is equivalent to the low order scheme \eqref{eq:LOM}. Furthermore, more generally, we can observe that for $\fnt{\theta}_{ij}\in\left[0,1\right]$:
\begin{align*}\vecf{f}_{ij}\left(\frac{\lambda_{ij}}{2}\fnt{\theta}_{ij}\right)\aligned{=}\vecf{f}_{ij}^{H}+\frac{\lambda_{ij}}{2}\fnt{\theta}_{ij}\left(\fnt{u}_{i}-\fnt{u}_{j}\right),\\
\aligned{=}\vecf{f}_{ij}^{H}+\fnt{\theta}_{ij}\left(\vecf{f}_{ij}^{L}-\vecf{f}_{ij}^{H}\right),\\
\aligned{=}\left(1-\fnt{\theta}_{ij}\right)\vecf{f}_{ij}^{H}+\fnt{\theta}_{ij}\vecf{f}_{ij}^{L}.\end{align*}
That is, $\vecf{f}_{ij}\left(\frac{\lambda_{ij}}{2}\fnt{\theta}_{ij}\right)$ can be expressed as a convex combination between the low and high order fluxes $\vecf{f}_{ij}^{L}$ and $\vecf{f}_{ij}^{H}$. We then define the weighted evaluation of the flux \eqref{eq:AVFlux}, $\vecf{f}_{ij}^{\text{KL}}\left(\fnt{\theta}_{ij}\right)=\vecf{f}_{ij}\left(\frac{\lambda_{ij}}{2}\fnt{\theta}_{ij}\right)$ the \textit{knapsack} blended flux. The use of this knapsack flux in the scheme \eqref{eq:wbcBlendedScheme} amounts to \textit{knapsack limiting}, which can be used to preserve positivity. \textit{Limiting coefficients} $\fnt{\ell^{c}}$ can be derived such that if $\fnt{\theta}_{ij}\in\left[\fnt{\ell^{c}}_{ij},1\right]$, the resulting scheme satisfies a \textit{relative positivity condition}:
\begin{align*}\fnt{u}_{i}+\Delta t\cdot\frac{\text{d}\fnt{u}_{i}}{\text{d}t}\ge\alpha\left(\fnt{u}_{i}+\Delta t\cdot\frac{\text{d}\fnt{u}_{i}^{L}}{\text{d}t}\right),\labell{eq:RelativePositivity}\end{align*}
for some $\alpha\in\left[0,1\right]$. Importantly, it must be the case that $\fnt{\ell^{c}}_{ij}=\fnt{\ell^{c}}_{ji}$ to ensure conservation is satisfied. Thus, a symmetrization process must take place after determining the limiting coefficients from \eqref{eq:RelativePositivity}. The pre-symmetrization diffusion coefficients $\hat{\fnt{\theta}}_{ij}$ may then be determined from the \textit{quadratic knapsack problem}:
\begin{align*}\min_{\begin{matrix}\fnt{a}_{i}^{T}\hat{\fnt{\theta}}\ge b_{i}\\
0\le\hat{\fnt{\theta}}\le1-\fnt{\ell^{c}}_{i}\end{matrix}}\hat{\fnt{\theta}}^{T}\hat{\fnt{\theta}},\labell{eq:PPKnapsack}\end{align*}
where 
\begin{align*}\left(\fnt{a}_{i}\right)_{j}=\fnt{a}_{ij}\aligned{=}\left\lVert\fnt{n}_{ij}\right\rVert\left[\left(\fnt{v}_{j}-\fnt{v}_{i}\right)^{T}\left(\vecf{f}_{ij}^{H}-\vecf{f}_{ij}^{L}\right)\right],\\
b_{i}\aligned{=}\sum_{j}^{ }\left\lVert\fnt{n}_{ij}\right\rVert\left[\left(\fnt{v}_{j}-\fnt{v}_{i}\right)^{T}\vecf{f}_{ij}^{H}-\left(\psi\left(\fnt{u}_{j}\right)-\psi\left(\fnt{u}_{i}\right)\right)\cdot\hat{\fnt{n}}_{ij}\right]-\fnt{a}_{i}^{T}\fnt{\ell^{c}}_{i},\\
\left(\fnt{\ell^{c}}_{i}\right)_{j}\aligned{=}\fnt{\ell^{c}}_{ij}.\end{align*}
An efficient, linear-time algorithm for this quadratic knapsack problem was proved in \cite{Christner25}. At this point, symmetrizing the diffusion coefficients to $\fnt{\theta}_{ij}=\max\left\{\hat{\fnt{\theta}}_{ij},\hat{\fnt{\theta}}_{ji}\right\}$, the scheme \eqref{eq:wbcBlendedScheme} is provably positivity preserving when applied with the diffusion coefficients $\fnt{\theta}_{ij}+\fnt{\ell^{c}}_{ij}$. 

Note that the use of the Lax-Friedrichs flux for $\vecf{f}^{L}$ is not necessary. The low order flux must be entropy stable, in such a way that the scheme \eqref{eq:LOM} preserves positivity. For instance, the HLLC flux can be chosen instead. However, in this case, the resulting knapsack limiting flux $\vecf{f}_{ij}^{\text{KL}}\left(\fnt{\theta}_{ij}\right)$ is \textit{not} equivalent to a weighted evaluation of the flux \eqref{eq:AVFlux}. That being said, the same strategies apply. 

\section{Numerical Results}\label{Numerical Results}

In this section, we first analyze the spatial convergence of the proposed artificial viscosity (ECAV-FD) and knapsack limiting (KL-FD) schemes, and examine their solutions on an entropy stability benchmark. Then, we determine the performance of each scheme by timing against a standard finite difference method for a smooth initial condition. We then apply the proposed schemes to various positivity preservation benchmarks. Finally, we examine the solution to various two-dimensional problems.

All simulations utilize the compressible Euler equations. We use various timesteppers throughout this section, including \texttt{RK4}, a 4-stage 4th order method, and \texttt{SSPRK43}, a strong stability preserving (SSP) 4-stage, 3rd order method. We specify for each experiment whether the timesteppers used a fixed or adaptive timestep, which are both implemented in \texttt{OrdinaryDiffEq.jl} \cite{OrdinaryDiffEq}. Similarly, we use implementations from \texttt{Trixi.jl} \cite{Trixi1, Trixi3} for all quantities related to the compressible Euler equations. The KL-FD scheme utilizes the HLLC flux \cite{Harten83} for its entropy stable flux $\vecf{f}^{L}$ in all simulations. For this reason, we will refer to it as KL-FD-HLLC for clarity.

\subsection{Spatial Convergence}\label{Spatial Convergence}

In this section, we examine the spatial convergence of the ECAV-FD and KL-FD-HLLC schemes in order to demonstrate that they obtain arbitrarily high spatial orders of accuracy. We also visually examine solutions generated by each scheme. 

We start by numerically estimating the spatial convergence of a 1D density wave with a smooth initial condition. Then, we show plots for the Shu-Osher shock tube problem. 

First, we test the high order spatial convergence of our schemes using a 1D density wave. The initial condition for the 1D compressible Euler density wave is as follows:
\begin{align*}p\left(x,0\right)\aligned{=}1,&v\left(x,0\right)\aligned{=}1.7,&\rho\left(x,0\right)\aligned{=}\frac{1}{2}\sin\left(\pi x\right)+1.\end{align*}
The density wave solution is computed using the \texttt{RK4} timestepper with a fixed $\Delta t=10^{-4}$ to a final time of $T=1$, over the periodic domain $\left[-1,1\right]$. In Tables \eqref{tab:ECAVConvergence} and \eqref{tab:KLConvergence}, the $L^{2}$ norm of the solution at the final time is computed against the analytical solution for various node counts $n$. This is done for polynomial orders $N=2$ through $N=5$. The errors are used to estimate the spatial order of convergence. We observe that the optimal order of convergence $\mathcal{O}\left(\Delta x^{N}\right)$ is attained by each scheme. 
\begin{table}[h!]
\centering
\resizebox{\textwidth}{!}{
\begin{tabulary}{\textwidth}{{|C|C|C|C|C|C|C|C|C|}}
\hline
 & \multicolumn{2}{|c|}{$N=2$} & \multicolumn{2}{|c|}{$N=3$} & \multicolumn{2}{|c|}{$N=4$} & \multicolumn{2}{|c|}{$N=5$} \\
\hline
$n$ & \multicolumn{1}{|c}{$L^{2}\ $Error} & Rate & \multicolumn{1}{|c}{$L^{2}\ $Error} & Rate & \multicolumn{1}{|c}{$L^{2}\ $Error} & Rate & \multicolumn{1}{|c}{$L^{2}\ $Error} & Rate \\
\hline
$16$ & \multicolumn{1}{|c}{$1.65\cdot10^{-1}$} & $-$ & \multicolumn{1}{|c}{$1.59\cdot10^{-2}$} & $-$ & \multicolumn{1}{|c}{$1.59\cdot10^{-2}$} & $-$ & \multicolumn{1}{|c}{$6.43\cdot10^{-3}$} & $-$ \\
\hline
$32$ & \multicolumn{1}{|c}{$4.18\cdot10^{-2}$} & $1.98$ & \multicolumn{1}{|c}{$2.20\cdot10^{-3}$} & $2.86$ & \multicolumn{1}{|c}{$2.20\cdot10^{-3}$} & $2.86$ & \multicolumn{1}{|c}{$4.99\cdot10^{-4}$} & $3.69$ \\
\hline
$64$ & \multicolumn{1}{|c}{$1.05\cdot10^{-2}$} & $2.00$ & \multicolumn{1}{|c}{$2.53\cdot10^{-4}$} & $3.12$ & \multicolumn{1}{|c}{$2.53\cdot10^{-4}$} & $3.12$ & \multicolumn{1}{|c}{$1.87\cdot10^{-5}$} & $4.74$ \\
\hline
$128$ & \multicolumn{1}{|c}{$2.62\cdot10^{-3}$} & $2.00$ & \multicolumn{1}{|c}{$2.11\cdot10^{-5}$} & $3.58$ & \multicolumn{1}{|c}{$2.11\cdot10^{-5}$} & $3.58$ & \multicolumn{1}{|c}{$4.43\cdot10^{-7}$} & $5.40$ \\
\hline
$256$ & \multicolumn{1}{|c}{$6.56\cdot10^{-4}$} & $2.00$ & \multicolumn{1}{|c}{$1.69\cdot10^{-6}$} & $3.64$ & \multicolumn{1}{|c}{$1.69\cdot10^{-6}$} & $3.64$ & \multicolumn{1}{|c}{$9.41\cdot10^{-9}$} & $5.56$ \\
\hline
$512$ & \multicolumn{1}{|c}{$1.64\cdot10^{-4}$} & $2.00$ & \multicolumn{1}{|c}{$1.33\cdot10^{-7}$} & $3.67$ & \multicolumn{1}{|c}{$1.33\cdot10^{-7}$} & $3.67$ & \multicolumn{1}{|c}{$1.78\cdot10^{-10}$} & $5.72$ \\
\hline
\end{tabulary}}
\caption{Errors and estimated orders of convergence of ECAV-FD for degrees$\ N.$}
\label{tab:ECAVConvergence}
\end{table}

\begin{table}[h!]
\centering
\resizebox{\textwidth}{!}{
\begin{tabulary}{\textwidth}{{|C|C|C|C|C|C|C|C|C|}}
\hline
 & \multicolumn{2}{|c|}{$N=2$} & \multicolumn{2}{|c|}{$N=3$} & \multicolumn{2}{|c|}{$N=4$} & \multicolumn{2}{|c|}{$N=5$} \\
\hline
$n$ & \multicolumn{1}{|c}{$L^{2}\ $Error} & Rate & \multicolumn{1}{|c}{$L^{2}\ $Error} & Rate & \multicolumn{1}{|c}{$L^{2}\ $Error} & Rate & \multicolumn{1}{|c}{$L^{2}\ $Error} & Rate \\
\hline
$16$ & \multicolumn{1}{|c}{$1.65\cdot10^{-1}$} & $-$ & \multicolumn{1}{|c}{$1.59\cdot10^{-2}$} & $-$ & \multicolumn{1}{|c}{$1.59\cdot10^{-2}$} & $-$ & \multicolumn{1}{|c}{$6.43\cdot10^{-3}$} & $-$ \\
\hline
$32$ & \multicolumn{1}{|c}{$4.18\cdot10^{-2}$} & $1.98$ & \multicolumn{1}{|c}{$2.20\cdot10^{-3}$} & $2.86$ & \multicolumn{1}{|c}{$2.20\cdot10^{-3}$} & $2.86$ & \multicolumn{1}{|c}{$4.99\cdot10^{-4}$} & $3.69$ \\
\hline
$64$ & \multicolumn{1}{|c}{$1.05\cdot10^{-2}$} & $2.00$ & \multicolumn{1}{|c}{$2.53\cdot10^{-4}$} & $3.12$ & \multicolumn{1}{|c}{$2.53\cdot10^{-4}$} & $3.12$ & \multicolumn{1}{|c}{$1.87\cdot10^{-5}$} & $4.74$ \\
\hline
$128$ & \multicolumn{1}{|c}{$2.62\cdot10^{-3}$} & $2.00$ & \multicolumn{1}{|c}{$2.11\cdot10^{-5}$} & $3.58$ & \multicolumn{1}{|c}{$2.11\cdot10^{-5}$} & $3.58$ & \multicolumn{1}{|c}{$4.43\cdot10^{-7}$} & $5.40$ \\
\hline
$256$ & \multicolumn{1}{|c}{$6.56\cdot10^{-4}$} & $2.00$ & \multicolumn{1}{|c}{$1.69\cdot10^{-6}$} & $3.64$ & \multicolumn{1}{|c}{$1.69\cdot10^{-6}$} & $3.64$ & \multicolumn{1}{|c}{$9.41\cdot10^{-9}$} & $5.56$ \\
\hline
$512$ & \multicolumn{1}{|c}{$1.64\cdot10^{-4}$} & $2.00$ & \multicolumn{1}{|c}{$1.35\cdot10^{-7}$} & $3.64$ & \multicolumn{1}{|c}{$1.35\cdot10^{-7}$} & $3.64$ & \multicolumn{1}{|c}{$1.78\cdot10^{-10}$} & $5.72$ \\
\hline
\end{tabulary}}
\caption{Errors and estimated orders of convergence of KL-FD-HLLC for degrees $N.$}
\label{tab:KLConvergence}
\end{table}

\subsection{Shu-Osher Shock Tube}\label{Shu-Osher Shock Tube}

Next, we examine the solutions generated in the case when some form of stabilization is required to avoid crashing. To do this, we use the Shu-Osher shock tube initial condition for the 1D compressible Euler equations:
\begin{align*}\left(\rho\left(x,0\right),v\left(x,0\right),p\left(x,0\right)\right)=\begin{cases}\left(3.857143,2.629369,10.3333\right)&x<-4\\
\left(1+.2\sin\left(5x\right),0,1\right)&\text{otherwise}.\end{cases}\end{align*}
The Shu-Osher solution is computed using the adaptive \texttt{SSPRK43} timestepper, with the absolute and relative tolerances set to $10^{-6}$ and $10^{-4}$ respectively. The simulation is performed to a final time of $T=1.8$, over the domain $\left[-5,5\right]$, with Dirichlet boundary conditions enforcing that the solution remain constant at the boundaries over time. In Figure \eqref{fig:ShuOsher}, the density of the solutions at the final time computed using $4$th order ECAV-FD and KL-FD-HLLC for $n=500$ and $n=1500$ are shown. These node counts are chosen for consistency with \cite{Popovas25}. Each solution is compared with a reference solution computed using a 5th order WENO method with 25000 nodes. We observe that for $n=500$, KL-FD-HLLC is a closer approximation of the reference solution near the highly oscillatory region to the right. We also observe that to the left side, each finite difference approximation has small oscillations. However, for $n=1500$, we observe that the oscillations are reduced to the left hand side for each finite difference approximation, and we observe that each finite difference solution is a good approximation of the reference solution on the right hand side. 

\begin{figure}%
    \centering
    \subfloat[\centering $n=500\ $nodes]{{\includegraphics[width=0.333\textwidth]{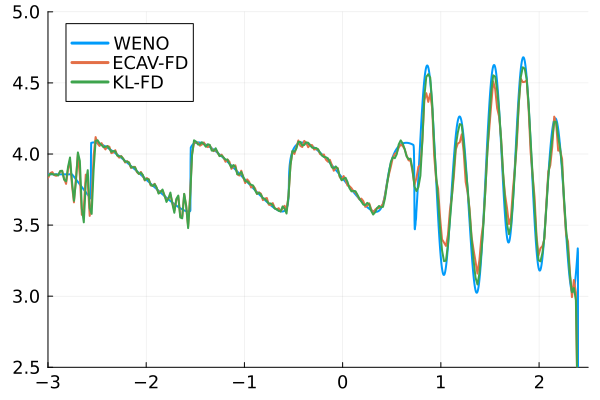} }}%
    \subfloat[\centering $n=1500\ $nodes$,\ $ECAV-FD]{{\includegraphics[width=0.333\textwidth]{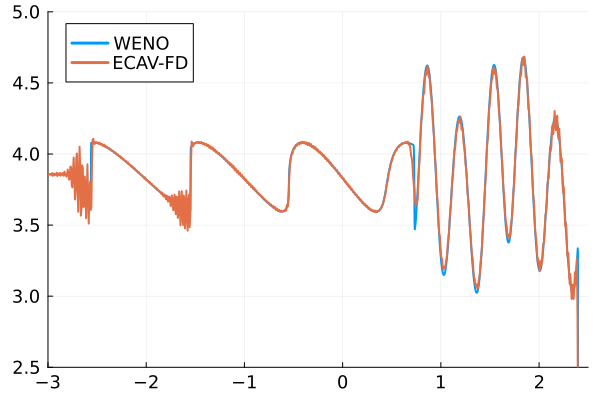} }}%
    \subfloat[\centering $n=1500$ nodes, KL-FD-HLLC]{{\includegraphics[width=0.333\textwidth]{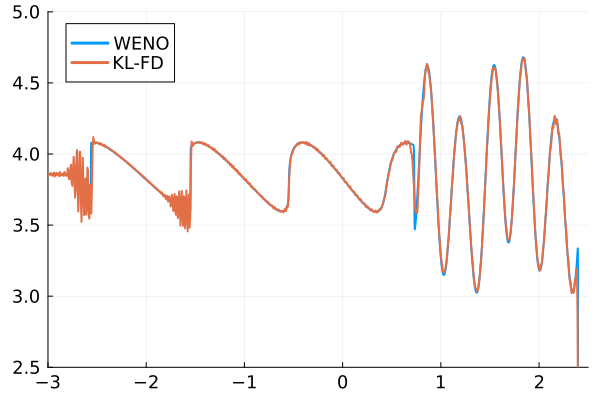} }}%
    \caption{Density of the Shu-Osher shock tube solution at final time$\ T=1.8\ $with$\ N=4.$}%
    \label{fig:ShuOsher}%
\end{figure}

\subsection{Positivity Preservation}\label{Positivity Preservation}

In the simulations performed thus far, preservation of positivity was not strictly necessary. Thus, the limiting coefficients $\fnt{\ell^{c}}$ were set to $\fnt{0}$. In this section, we benchmark KL-FD-HLLC in various difficult benchmarks of positivity preservation. Limiting coefficients are set on a nodewise basis, and then symmetrized, via the following formula, in order to satisfy the relative positivity constraint \eqref{eq:RelativePositivity}:
\begin{align*}\hat{\fnt{\ell^{c}}}_{i}\aligned{=}\begin{cases}1-\frac{\left(1-\alpha\right)}{\Delta t}\left(\frac{\fnt{M}_{ii}\fnt{u}_{i}-\Delta t\cdot\fnt{r}_{i}^{L}}{\fnt{r}_{i}^{H}-\fnt{r}_{i}^{L}}\right)&\fnt{r}_{i}^{H}-\fnt{r}_{i}^{L}>0\\
0&\text{otherwise}\end{cases},&\fnt{\ell^{c}}_{ij}\aligned{=}\max\left\{\hat{\fnt{\ell^{c}}}_{i},\hat{\fnt{\ell^{c}}}_{j}\right\}.\labell{eq:LimitingCoeffs}\end{align*}
The diffusion coefficients are then determined via the strategy described in \eqref{eq:PPKnapsack}. In this section, we test KL-FD-HLLC in various difficult simulations, often used to test positivity preservation methods. 

First, we test KL-FD-HLLC on the 1D compressible Euler, Leblanc shock tube problem. This problem mimics pressure waves in near vacuum conditions, common in hypersonic flow. The initial condition for this problem is as follows:
\begin{align*}p\left(x,0\right)\aligned{=}\begin{cases}10^{9}&x<0\\
1&\text{otherwise}\end{cases},&v\left(x,0\right)\aligned{=}0,&\rho\left(x,0\right)\aligned{=}\begin{cases}2&x<0\\
10^{-3}&\text{otherwise}\end{cases}.\end{align*}
The initial condition has a pressure ratio of $10^{9}$ and a density ratio of $2000$. Due to oscillations formed around shocks in high order simulations which are proportional to the jump, preservation is poisitivity is necessary to produce a stable solution. The Leblanc shock tube solution is computed with the \texttt{SSPRK43} timestepper using a fixed timestep of $\Delta t=6\cdot10^{-8}$. This timestep is within $10^{-8}$ of the maximum stable timestep for the order $N=6$ simulation. The simulation is performed to a final time of $T=10^{-4}$, over the domain $\left[-10,10\right]$, with Dirichlet boundary conditions, enforcing that the solution remain constant at the boundaries over time. 

In Figure \eqref{fig:Leblanc}, we plot the density, velocity, and pressure of the solution generated by KL-FD-HLLC with $4000$ nodes, against a reference solution. The relative positivity constraint with $\alpha=.5$ is enforced. The reference solution is modified from the solution generated by the discontinuous Galerkin version of quadratic knapsack limiting \cite{Christner25}, with $3$rd order accuracy, and $12000$ elements. The plots are shown for both orders $N=4$ and $N=6$ order accuracy. We observe that the solutions are virtually identical, though there is a slight amount of oscillation present in the velocity of the $N=6$ solution at the peak. However, this tends not to appear with a smaller chosen timestep.

\begin{figure}%
    \centering
    \subfloat[\centering Density,$\ N=4$]{{\includegraphics[width=0.333\textwidth]{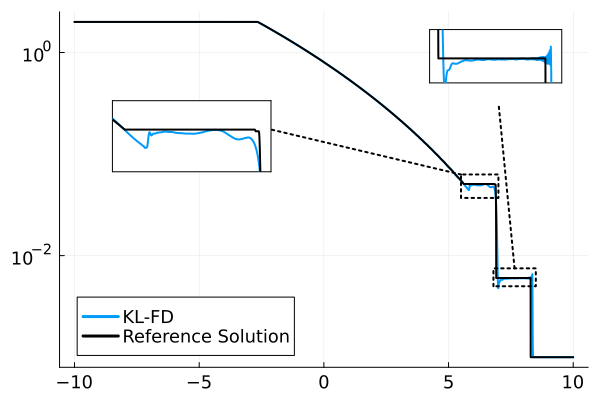} }}%
    \subfloat[\centering Velocity$,\ N=4$]{{\includegraphics[width=0.333\textwidth]{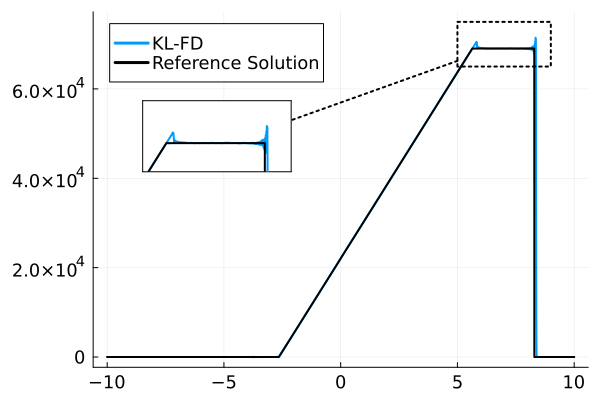} }}%
    \subfloat[\centering Pressure,$\ N=4$]{{\includegraphics[width=0.333\textwidth]{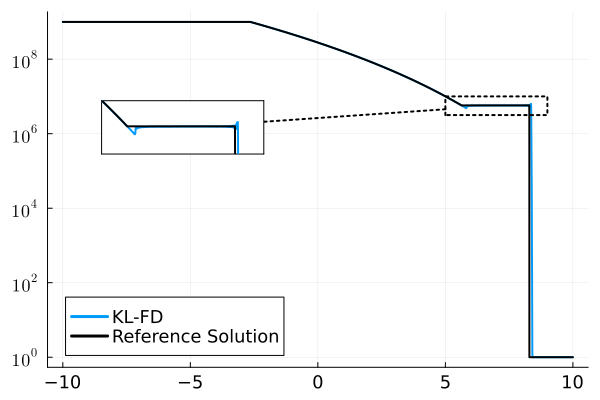} }}\\
    \subfloat[\centering Density,$\ N=6$]{{\includegraphics[width=0.333\textwidth]{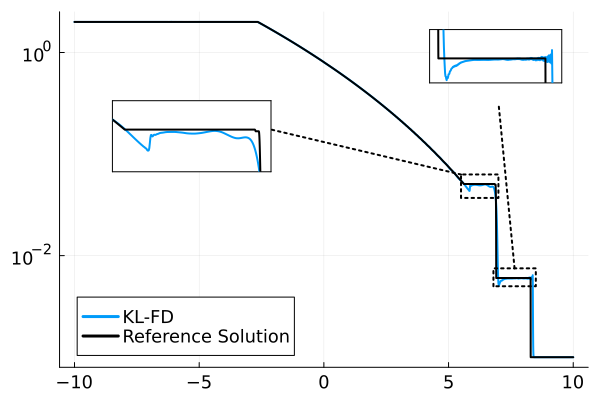} }}%
    \subfloat[\centering Velocity$,\ N=6$]{{\includegraphics[width=0.333\textwidth]{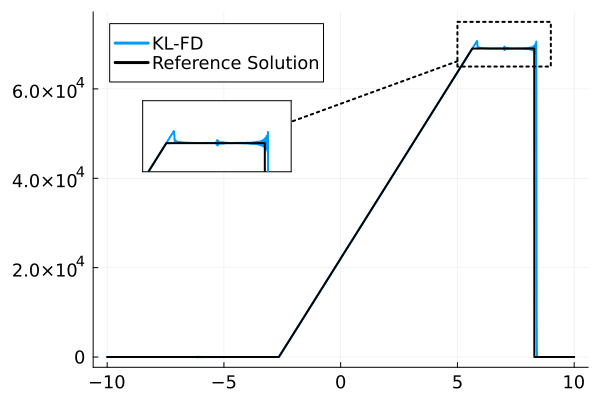} }}%
    \subfloat[\centering Pressure$,\ N=6$]{{\includegraphics[width=0.333\textwidth]{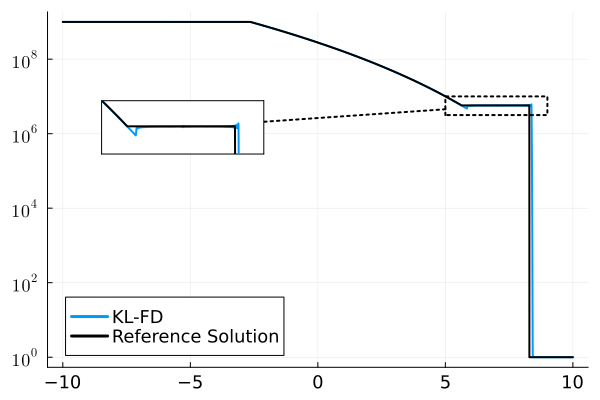} }}%
    \caption{KL-FD-HLLC solution to the Leblanc shock tube with$\ 4000\ $nodes and relative positivity constraint$\ \alpha=.5.$}%
    \label{fig:Leblanc}%
\end{figure}

Next, we test KL-FD-HLLC on the Woodward-Colella blast wave. This problem mimics the interaction of two blast waves, also common in hypersonic flow. The initial condition for this problem is as follows:
\begin{align*}\rho\left(x,0\right)\aligned{=}1,&v\left(x,0\right)\aligned{=}0,&p\left(x,0\right)\aligned{=}\begin{cases}10^{3}&x<.1\\
10^{-2}&.1\le x<.9\\
10^{2}&x\ge.9\end{cases}.\end{align*}
The initial condition has a pressure ratio of $10^{5}$ at the left interface, and $10^{4}$ at the right interface, agian making preservation of positivity necessary for a stable simulation. The Woodward-Colella blast wave solution is computed using the \texttt{SSPRK43} timestepper with a fixed timestep of $\Delta t=2\cdot10^{-5}$. This timestep is within $10^{-5}$ of the maximum stable timestep for the order $N=6$ simulation. The simulation is performed to a final time of $T=.038$, over the domain $\left[0,1\right]$, with reflective boundary conditions.

In Figure \eqref{fig:Woodward}, we plot the density of the solution generated by KL-FD-HLLC with the HLLC flux with $1200$ nodes, against a reference solution. The relative positivity constraint with $\alpha=.1$ is enforced. The reference solution is modified from the solution generated by the low order method \eqref{eq:LOM} with a $4$th order stencil and $25000$ nodes. The plots are shown for both orders $N=4$ and $N=6$ order accuracy. We observe that the solutions are similar, but the order $N=6$ simulation is a bit more dissipative. This is likely partially due to the fact that the limiting coefficients, as obtained in \eqref{eq:LimitingCoeffs}, must be made larger to account for a wider stencil in the symmetrization process.

\begin{figure}%
    \centering
    \subfloat[\centering $N=4$]{{\includegraphics[width=0.500\textwidth]{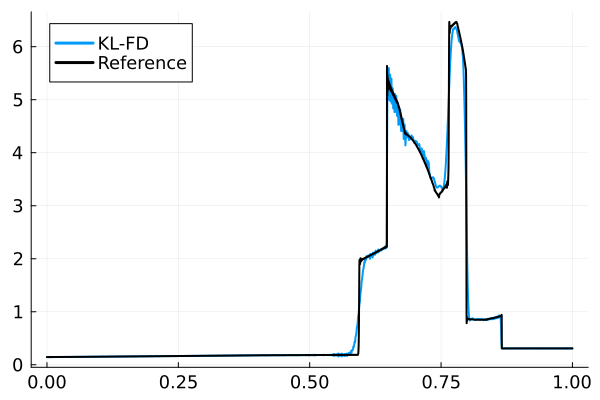} }}%
    \subfloat[\centering $N=6$]{{\includegraphics[width=0.500\textwidth]{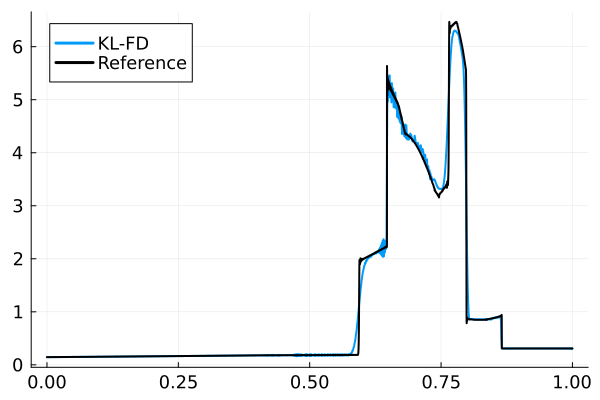} }}%
    \caption{KL-FD-HLLC solution to the Woodward-Colella blast wave with 1200 nodes and relative positivity constraint$\ \alpha=.1.$}%
    \label{fig:Woodward}%
\end{figure}

\subsection{Two-Dimensional Solutions}\label{Two-Dimensional Solutions}

In this section, we test ECAV-FD and KL-FD-HLLC in two dimensional simulations. In particular, we examine the density arising from the Kelvin-Helmholtz instability problem (KHI). This problem is often used to test the behavior of 2D numerical methods in the presence of turbulent-like flow. Without enforcing positivity, collocation-type entropy stable methods tend to crash with negative density for KHI around $T\approx3.5$ \cite{KHIPos}. However, both ECAV-FD and KL-FD-HLLC appear to be able to simulate KHI until and past $T=25$ without enforcing positivity. In this case, we call this \textit{long-time} Kelvin-Helmholtz instability. The initial condition for KHI is:
\begin{align*}\rho\left(x,y,0\right)\aligned{=}\frac{1}{2}+\frac{3}{4}B,&v_{1}\left(x,y,0\right)\aligned{=}\frac{1}{2}\left(B-1\right),\\
v_{2}\left(x,y,0\right)\aligned{=}\frac{1}{10}\sin\left(2\pi x\right),&p\left(x,y,0\right)\aligned{=}1,\end{align*}
where $B=\tanh\left(15y+7.5\right)-\tanh\left(15y-7.5\right)$. The long-time KHI solution is computed on the periodic domain $\left[-1,1\right]^{2}$ to a final time $T=25$, using the adaptive \texttt{SSPRK43} timestepper, with an absolute tolerance of $10^{-6}$ and relative tolerance of $10^{-4}$. In Figures \eqref{fig:KHI} and \eqref{fig:KHIKL}, we plot the solutions generated by ECAV-FD and KL-FD-HLLC respectively, at times $t=5$, $t=10$, and $t=25$, each for orders $N=4$ and $N=6$, and each with $512$ nodes along each dimension. The color range was truncated to $\left[.25,2.5\right]$. 

\begin{figure}%
    \centering
    \subfloat[\centering $N=4,\ t=5$]{{\includegraphics[width=0.333\textwidth]{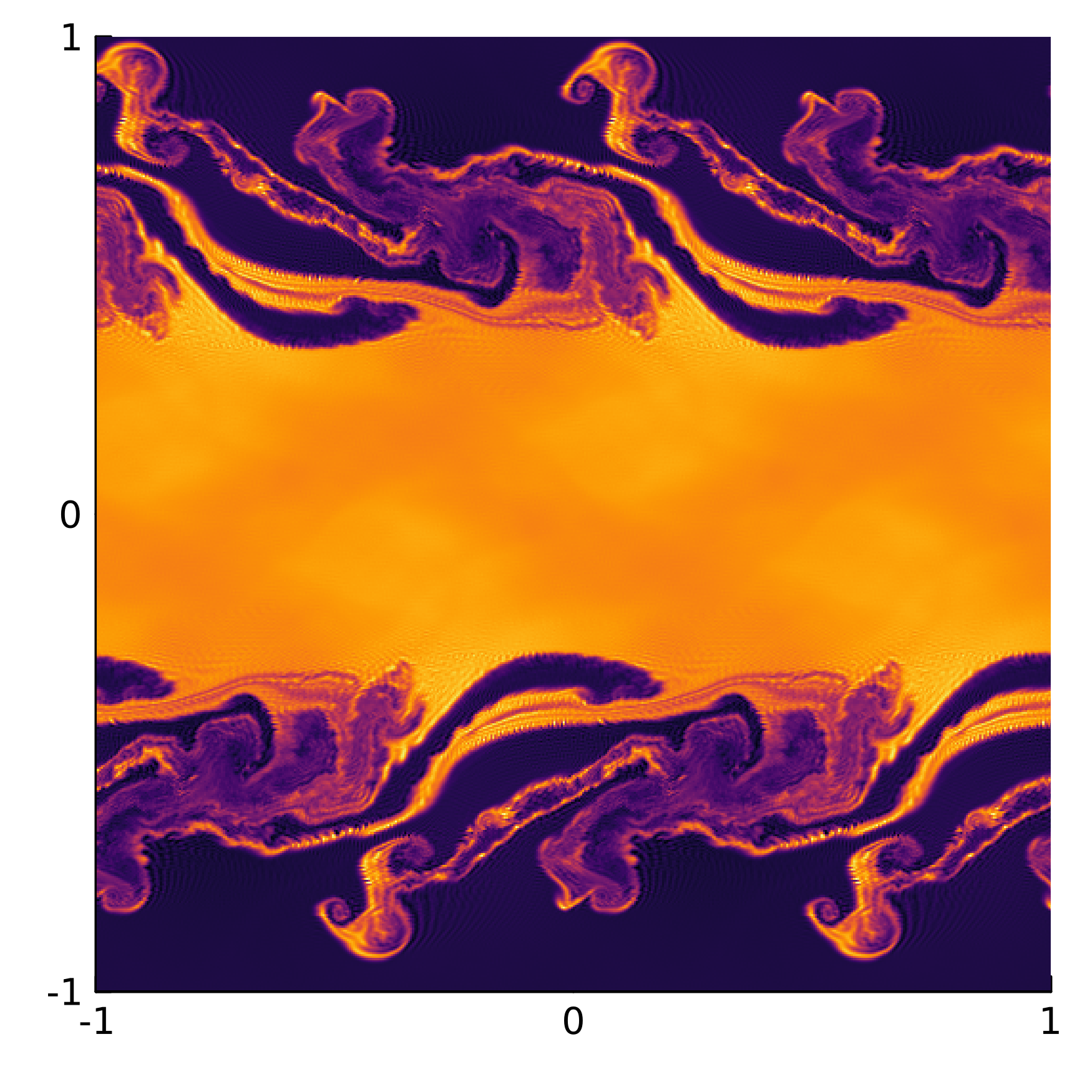} }}%
    \subfloat[\centering $N=4,\ t=10$]{{\includegraphics[width=0.333\textwidth]{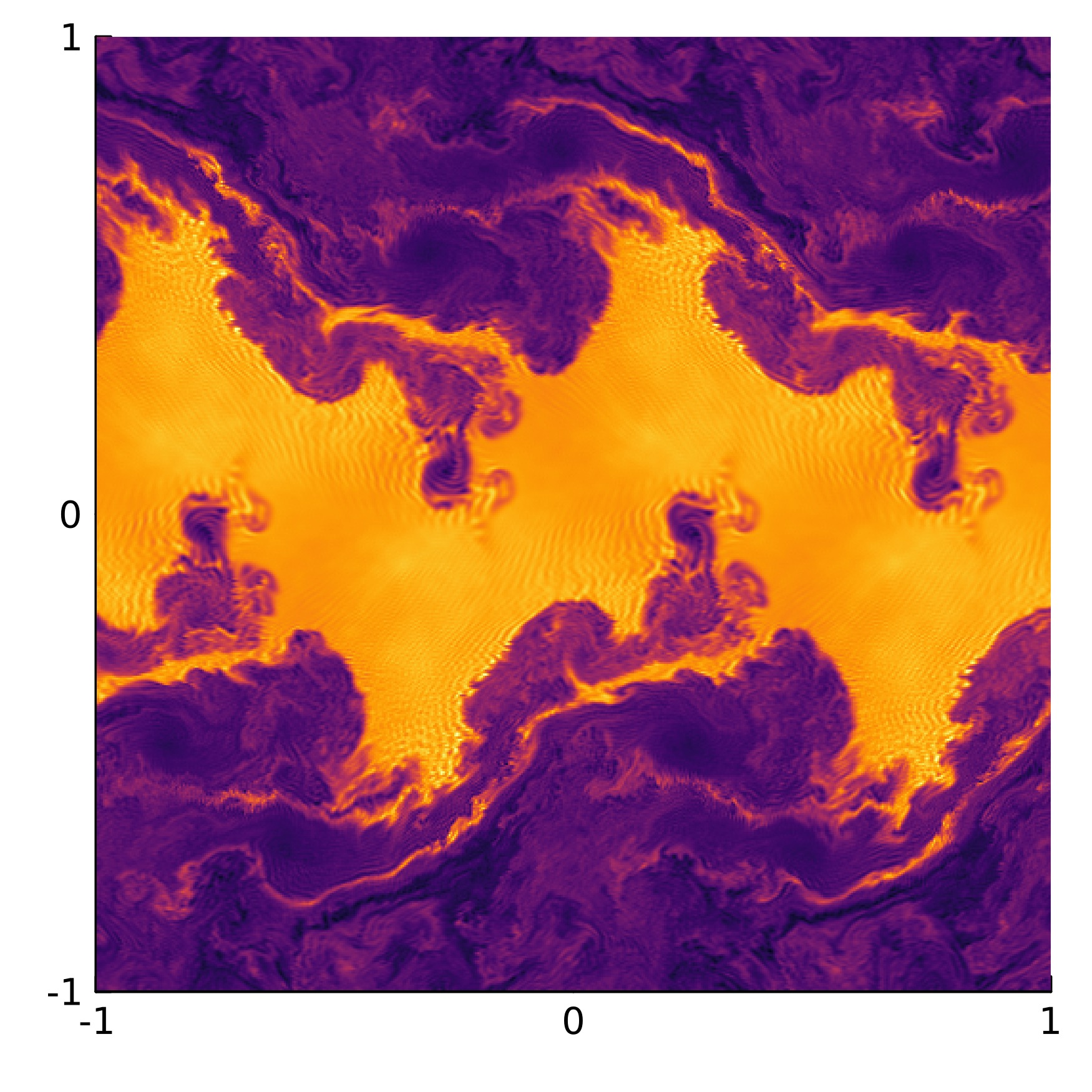} }}%
    \subfloat[\centering $N=6,\ t=25$]{{\includegraphics[width=0.333\textwidth]{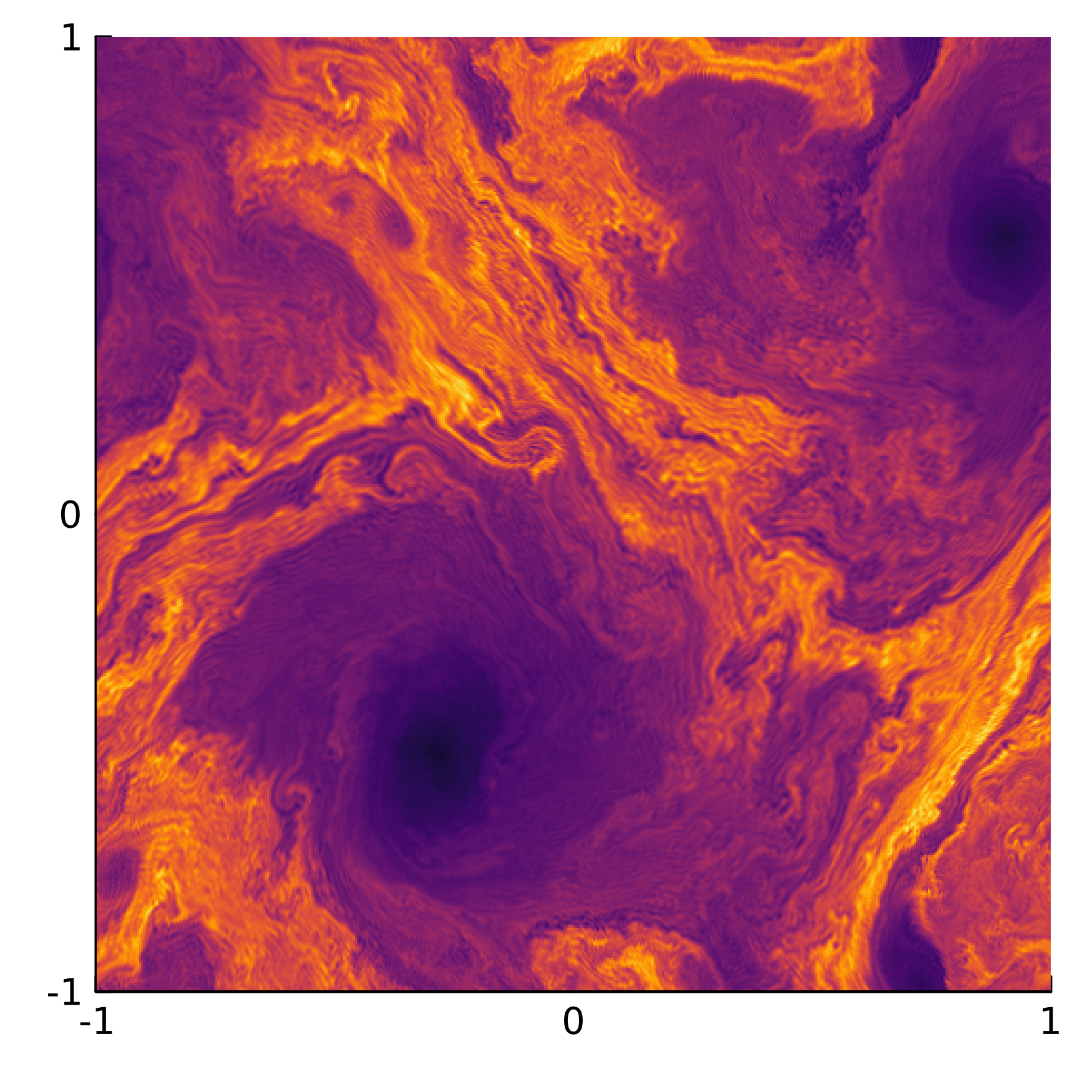} }}\\
    \subfloat[\centering $N=6,\ t=5$]{{\includegraphics[width=0.333\textwidth]{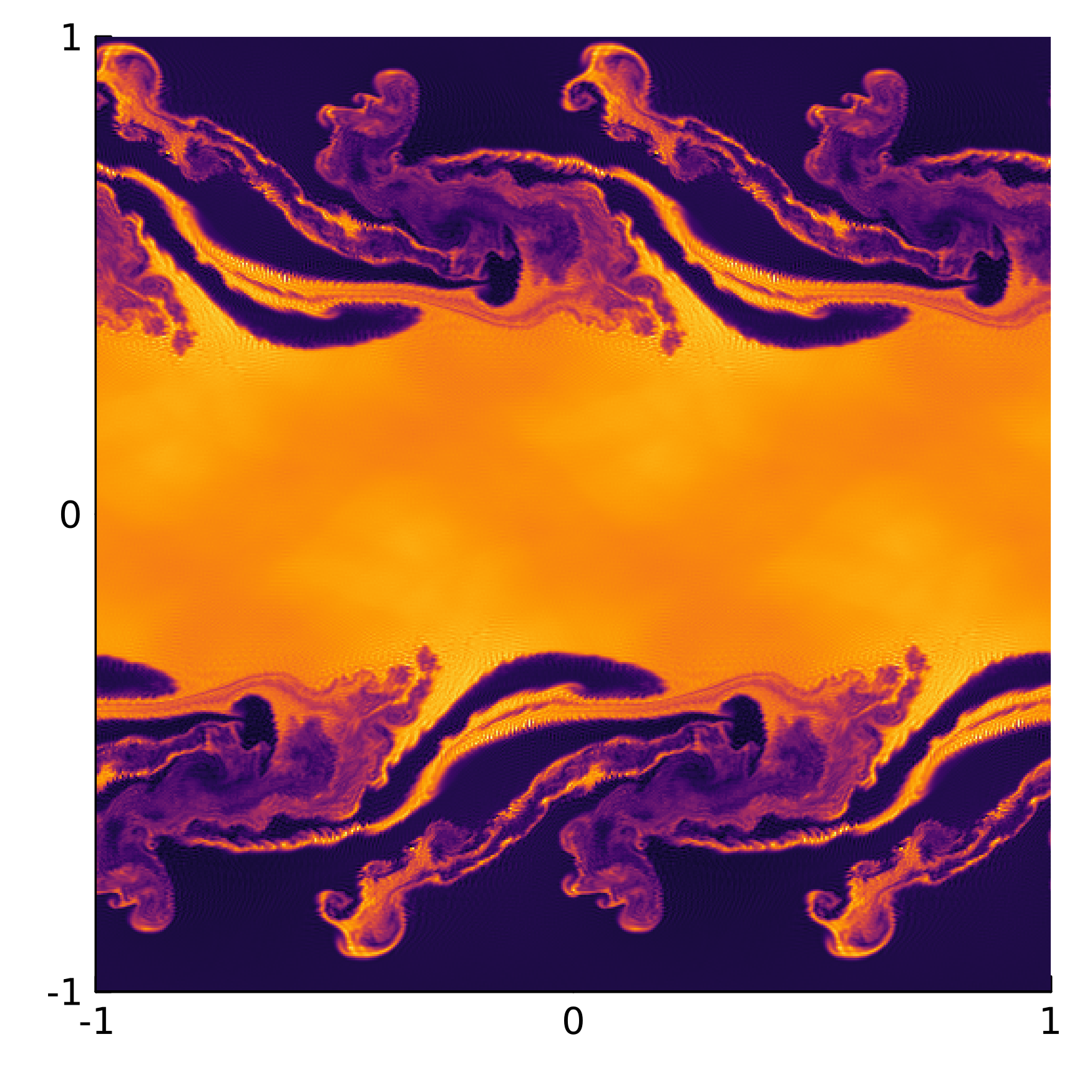} }}%
    \subfloat[\centering $N=6,\ t=10$]{{\includegraphics[width=0.333\textwidth]{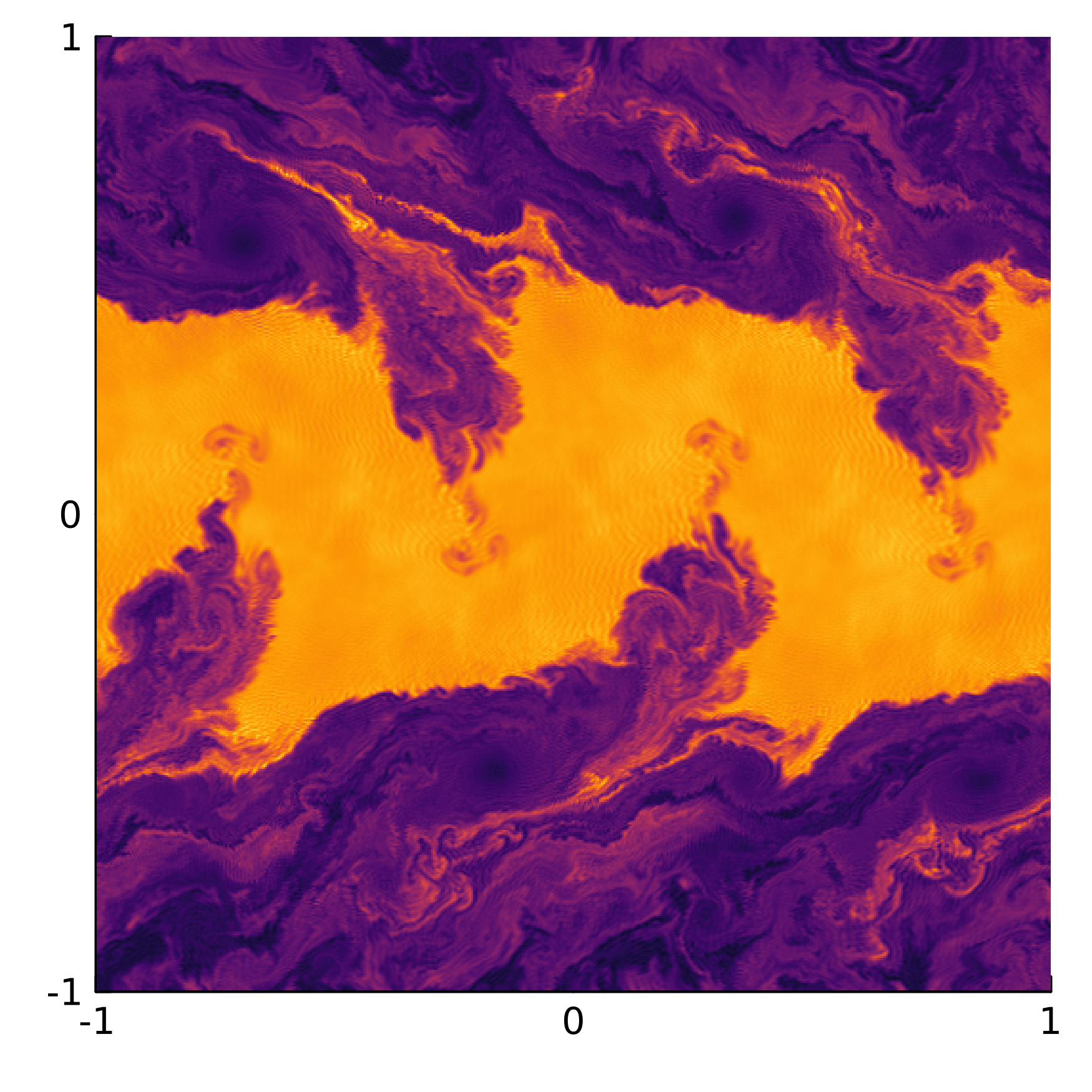} }}%
    \subfloat[\centering $N=6,\ t=25$]{{\includegraphics[width=0.333\textwidth]{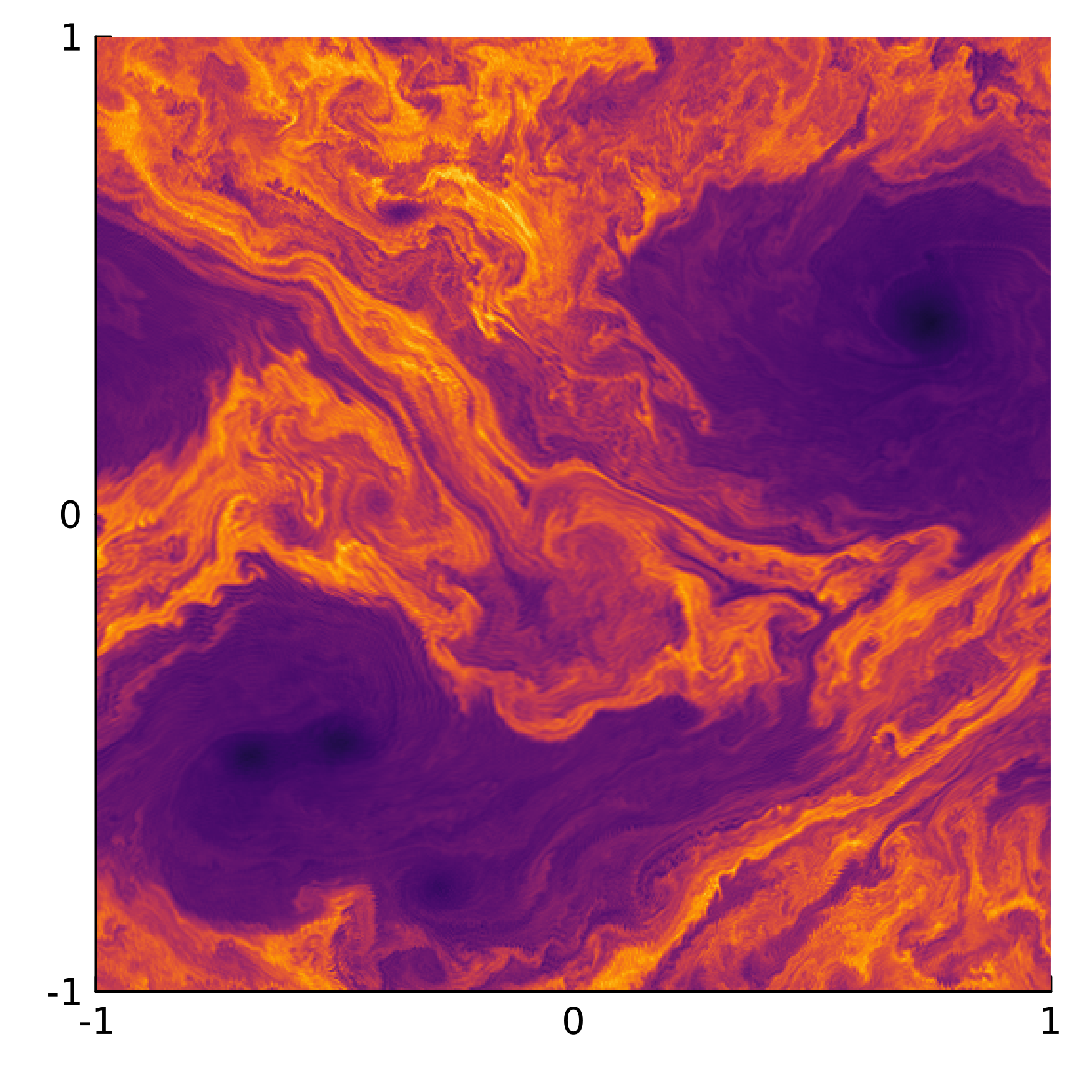} }}%
    \caption{ECAV-FD solution to Kelvin-Helmholtz instability problem.}%
    \label{fig:KHI}%
\end{figure}

\begin{figure}%
    \centering
    \subfloat[\centering $N=4,\ t=5$]{{\includegraphics[width=0.333\textwidth]{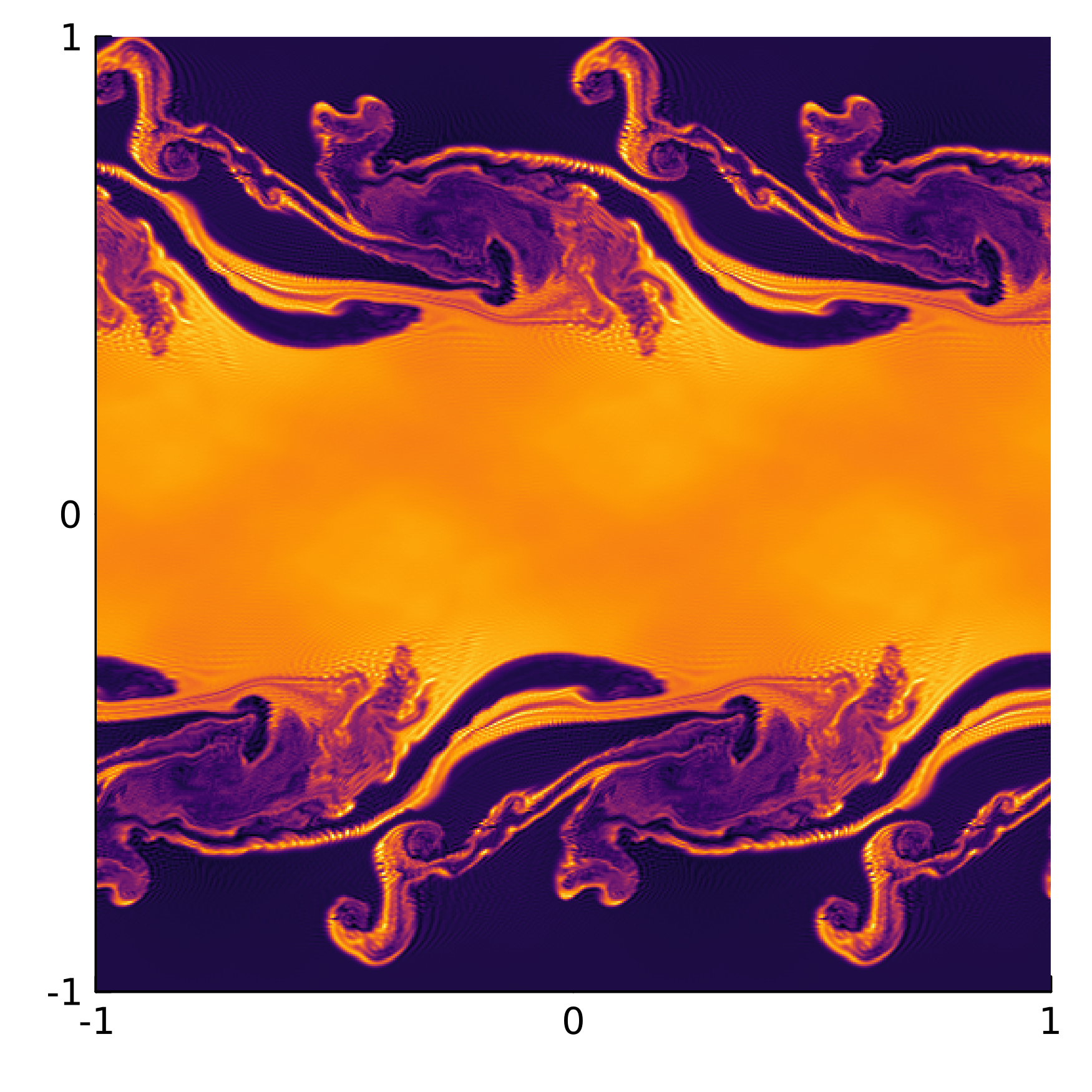} }}%
    \subfloat[\centering $N=4,\ t=10$]{{\includegraphics[width=0.333\textwidth]{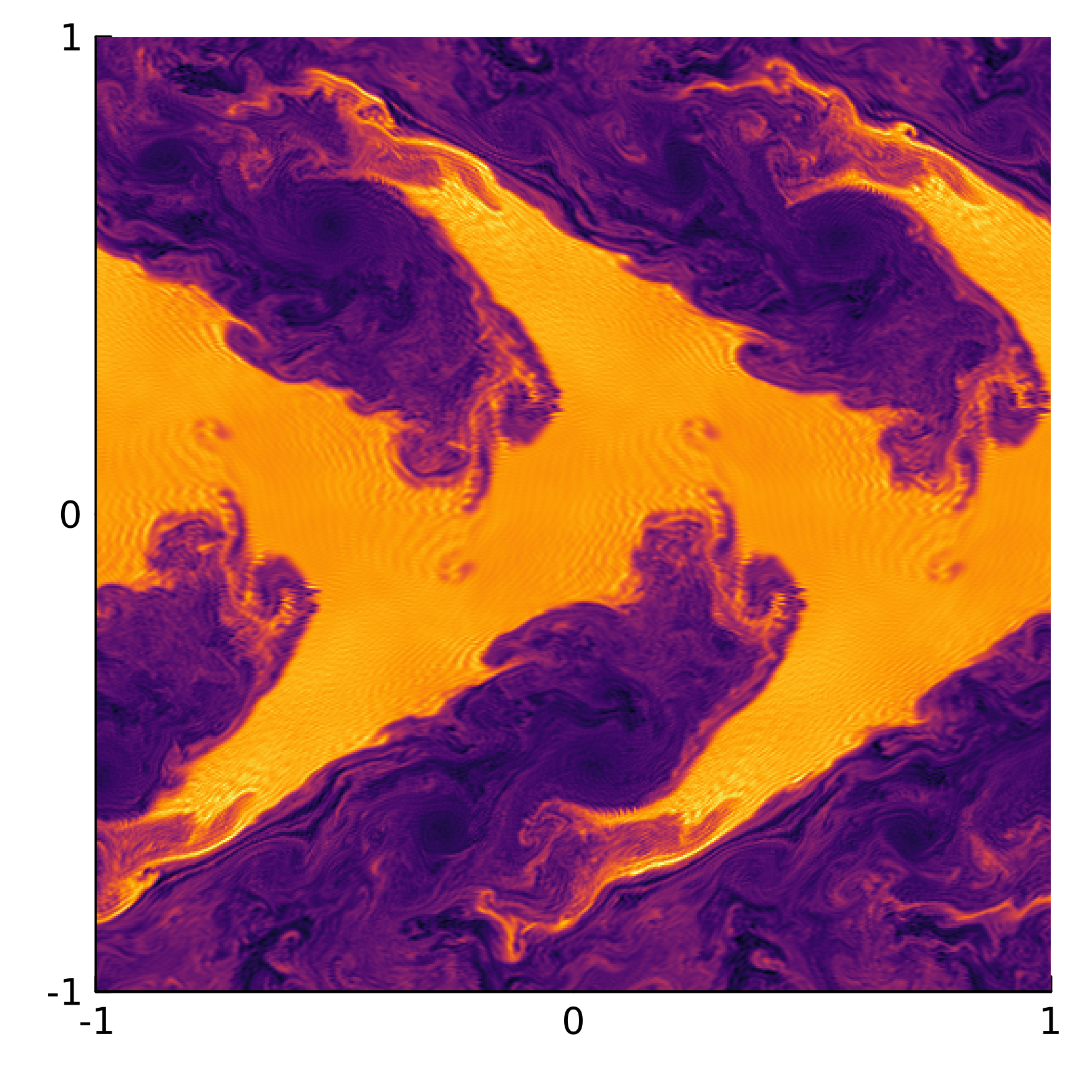} }}%
    \subfloat[\centering $N=4,\ t=25$]{{\includegraphics[width=0.333\textwidth]{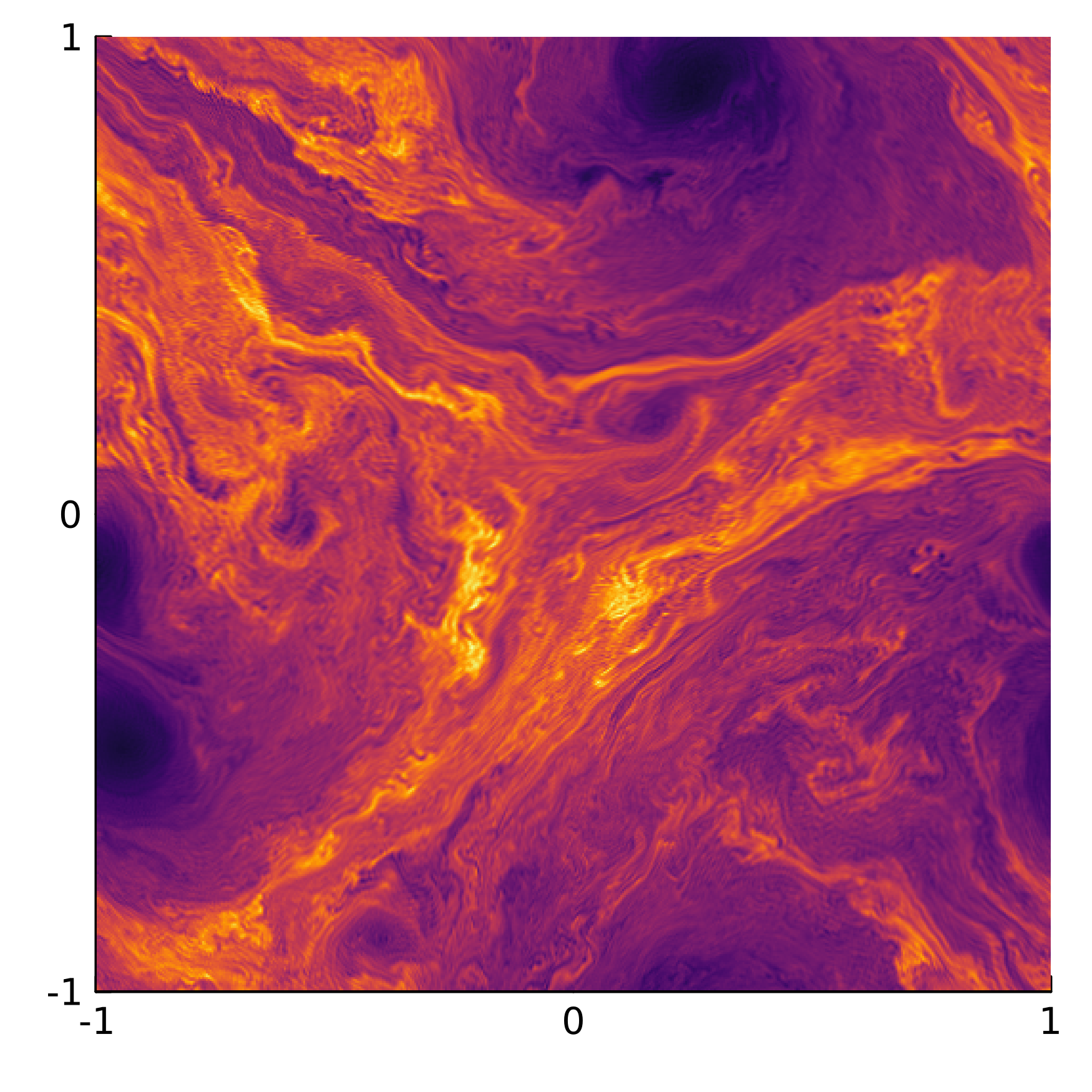} }}\\
    \subfloat[\centering $N=6,\ t=5$]{{\includegraphics[width=0.333\textwidth]{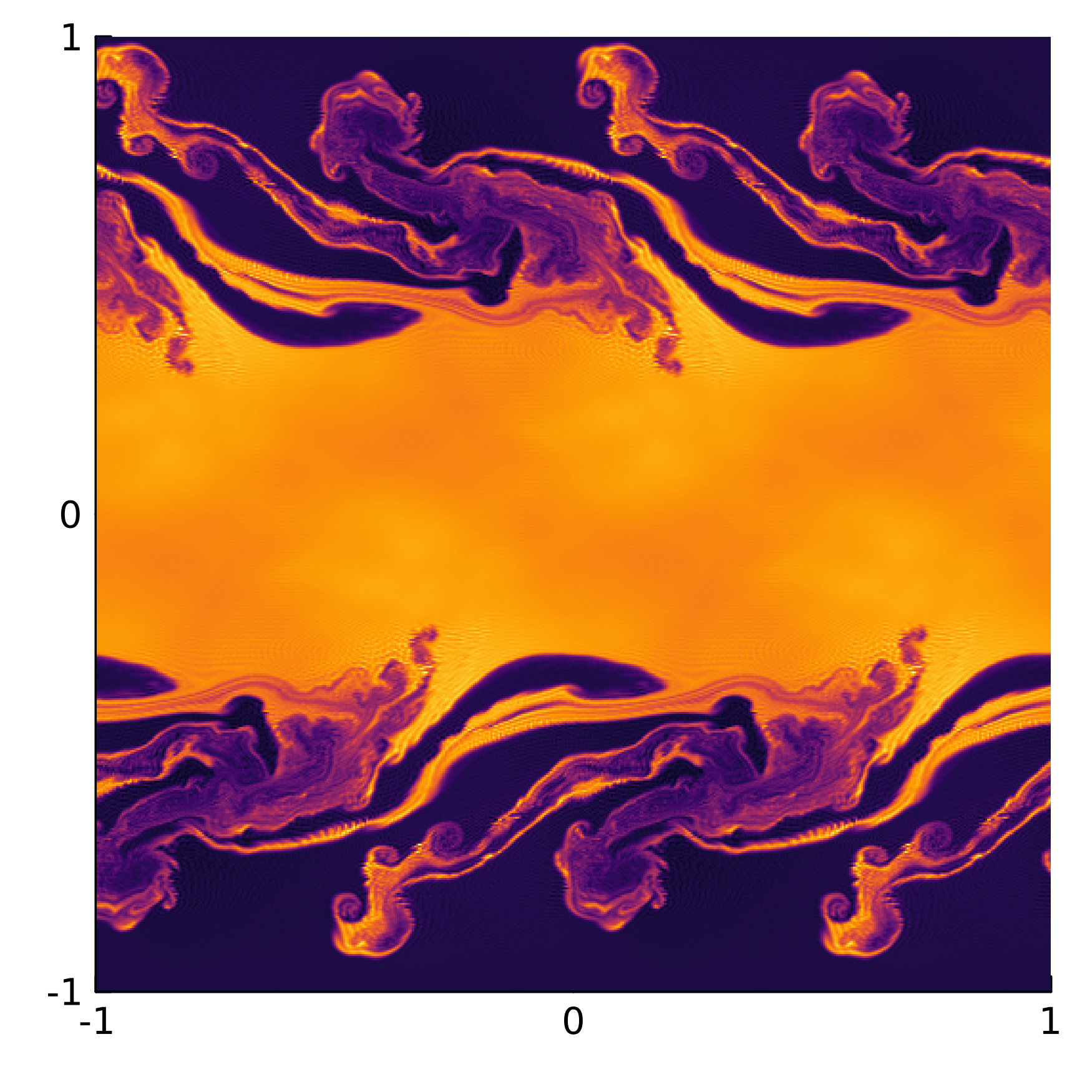} }}%
    \subfloat[\centering $N=6,\ t=10$]{{\includegraphics[width=0.333\textwidth]{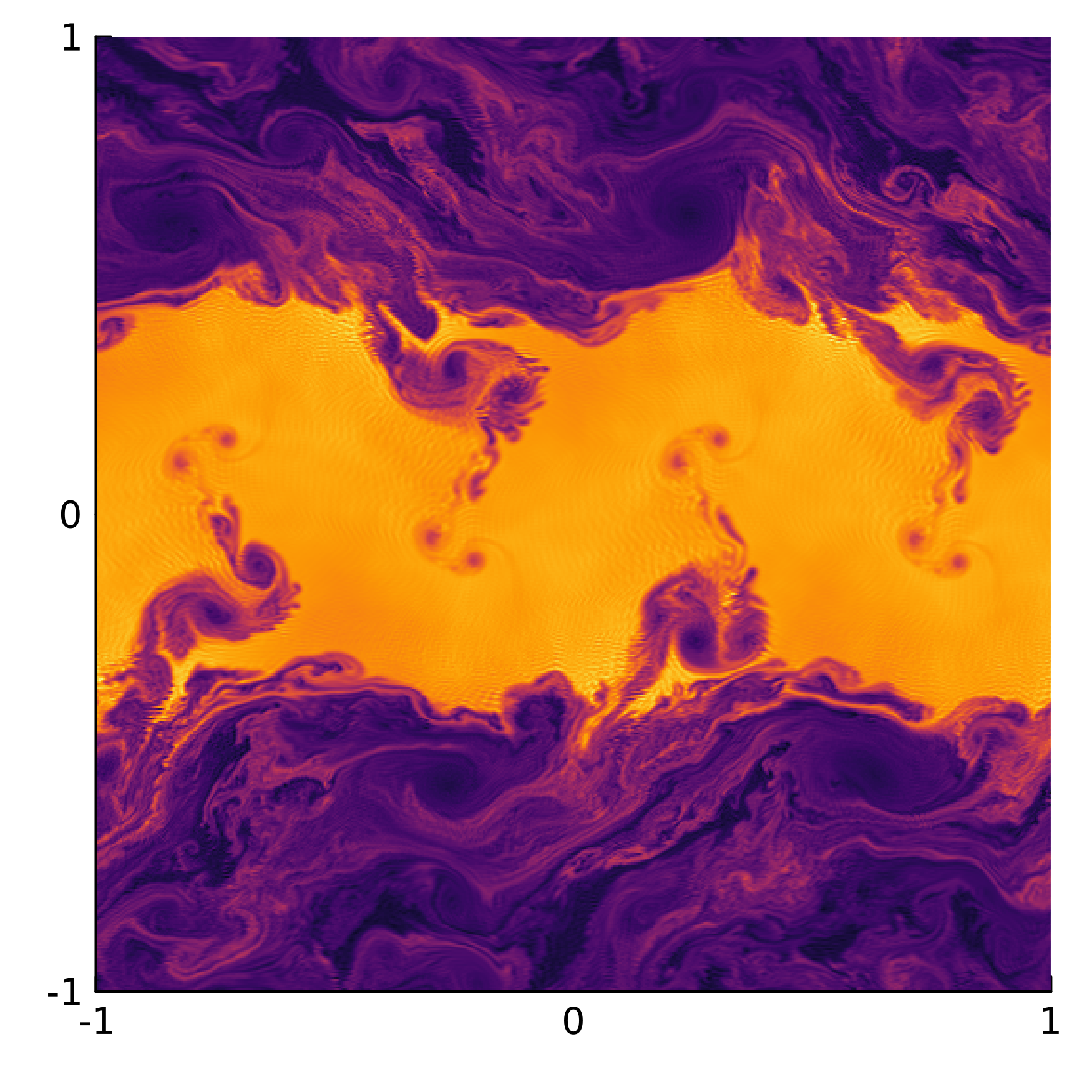} }}%
    \subfloat[\centering $N=6,\ t=25$]{{\includegraphics[width=0.333\textwidth]{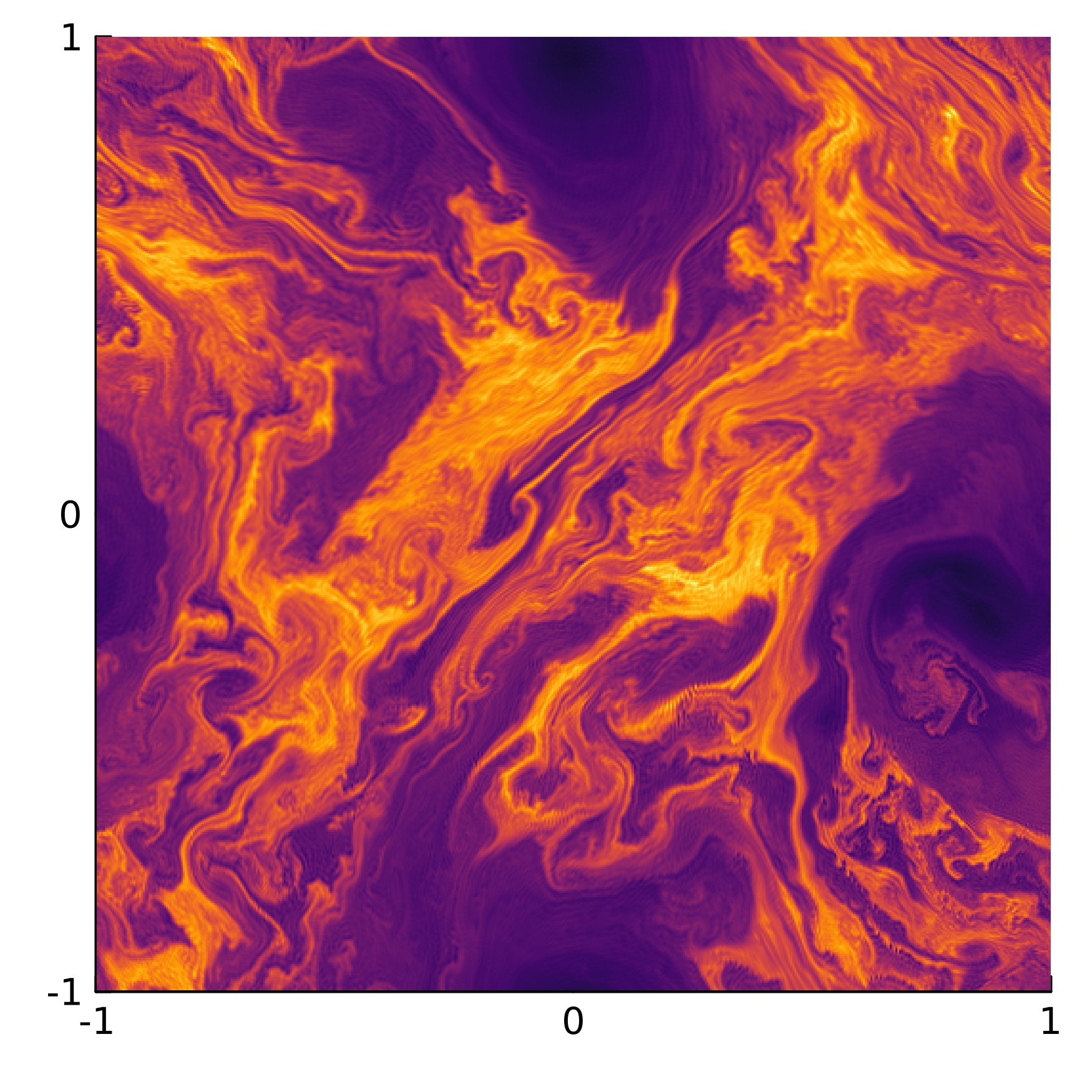} }}%
    \caption{KL-FD-HLLC solution to Kelvin-Helmholtz instability problem.}%
    \label{fig:KHIKL}%
\end{figure}
\section{Conclusion}\label{Conclusion}

In this work, we proposed two approaches for constructing high order accurate, conservative, entropy stable finite difference methods. The first is an entropy correction artificial viscosity method, which introduces a minimal amount of artificial viscosity in order to satisfy a semi-discrete entropy inequality. The second is a knapsack limiting technique, which minimally blends a low order method into a high order method in order to satisfy the same semi-discrete entropy inequality. We also discussed how knapsack limiting can be expressed as a weighted form of the entropy correction artificial viscosity method. Furthermore, knapsack limiting schemes can be used to preserve positivity. We also discussed how weakly-enforced boundary conditions can be imposed using summation by parts finite difference operators. Each scheme exhibits high order accuracy for sufficiently regular solutions, while also retaining entropy stability and conservation. 

\section{Acknowledgements}\label{Acknowledgements}

The authors gratefully acknowledge helpful discussions with Ayaboe Edoh and Brad Maeng, as well as support from National Science Foundation awards DMS-1943186 and DMS-223148. 

\section{Appendix: Optimization-Based Relaxation of the Entropy Inequality}\label{Optimization-Based Relaxation of the Entropy Inequality}

In this section, we will discuss a strategy of decoupling the diffusion coefficients in the spatial derivative of the flux and the spatial derivative of the entropy flux. The strategy allows for the artificial diffusion of the spatial derivative of the entropy flux to be decoupled from the diffusion applied to $\fnt{r}$ from the diffusion coefficients. In effect, this results in a relaxation of the entropy inequality \eqref{eq:DiscreteEI}. It does this by applying only a slight increase in the dimension of the optimization problems \eqref{eq:Knapsack} and \eqref{eq:PPKnapsack}, which does not scale with spatial dimension. 

We will demonstrate in the corresponding numerical results section that the ECAV-FD and KL-FD schemes result in excessive dissipation in high orders, and the relaxation discussed in this section helps to recover it. However, we will also show that relaxing the entropy inequality results in more oscillations in the generated solutions. Furthermore, relaxing the entropy inequality too much may result in an unstable scheme. The resulting strategy will be derived for the entropy correction artificial viscosity scheme, but the strategy for the knapsack limiting scheme will also be shown. Recall, the spatial derivative of the entropy flux is discretized as follows:
\begin{align*}\left[\sum_{k=1}^{d}\frac{\partial F_{k}\left(\vecf{u}\right)}{\partial x_{k}}\right]_{x_{i}}\aligned{\approx}\frac{1}{\fnt{M}_{ii}}\sum_{j}^{ }\left\lVert\fnt{n}_{ij}\right\rVert\left(\fnt{v}_{j}^{T}\vecf{f}_{ij}\left(\fnt{\theta}_{ij}\right)-\left(\psi\left(\fnt{u}_{j}\right)-\psi\left(\fnt{u}_{i}\right)\right)\cdot\hat{\fnt{n}}_{ij}\right).\end{align*}
The diffusion coefficients $\fnt{\theta}_{ij}$, which apply diffusion within the flux $\vecf{f}_{ij}\left(\fnt{\theta}_{ij}\right)$ to determine $\fnt{r}$ as in \eqref{eq:BlendedScheme}, also apply diffusion to the spatial derivative of the entropy flux. However, this does not need to be the case. Rather, we can instead replace $\fnt{\theta}_{ij}$ in the discretization of the spatial derivative of the entropy flux with an additional free variable, as such:
\begin{align*}\left[\sum_{k=1}^{d}\frac{\partial F_{k}\left(\vecf{u}\right)}{\partial x_{k}}\right]_{x_{i}}\aligned{\approx}\frac{1}{\fnt{M}_{ii}}\sum_{j}^{ }\left\lVert\fnt{n}_{ij}\right\rVert\left(\fnt{v}_{j}^{T}\vecf{f}_{ij}\left(\fnt{\tau}_{i}\right)-\left(\psi\left(\fnt{u}_{j}\right)-\psi\left(\fnt{u}_{i}\right)\right)\cdot\hat{\fnt{n}}_{ij}\right),\end{align*}
where $\fnt{\tau}\in\mathbb{R}^{n}$. At this point, the discretization of the entropy inequality \eqref{eq:EntropyInequality} has a new form:
\begin{align*}\fnt{v}_{i}^{T}\frac{\text{d}\fnt{u}_{i}}{\text{d}t}+\frac{1}{\fnt{M}_{ii}}\sum_{j}^{ }\left\lVert\fnt{n}_{ij}\right\rVert\left(\fnt{v}_{j}^{T}\vecf{f}_{ij}\left(\fnt{\tau}_{i}\right)-\left(\psi\left(\fnt{u}_{j}\right)-\psi\left(\fnt{u}_{i}\right)\right)\cdot\hat{\fnt{n}}_{ij}\right)\aligned{\le}0,\\
\fnt{v}_{i}^{T}\left(-\frac{1}{\fnt{M}_{ii}}\sum_{j}^{ }\left\lVert\fnt{n}_{ij}\right\rVert\vecf{f}_{ij}\left(\fnt{\theta}_{ij}\right)\right)+\frac{1}{\fnt{M}_{ii}}\sum_{j}^{ }\left\lVert\fnt{n}_{ij}\right\rVert\left(\fnt{v}_{j}^{T}\vecf{f}_{ij}\left(\fnt{\tau}_{i}\right)-\left(\psi\left(\fnt{u}_{j}\right)-\psi\left(\fnt{u}_{i}\right)\right)\cdot\hat{\fnt{n}}_{ij}\right)\aligned{\le}0,\tag{$\text{by}\ \eqref{eq:BlendedScheme}$}\\
\sum_{j}^{ }\left\lVert\fnt{n}_{ij}\right\rVert\left[\left(\fnt{v}_{j}^{T}\vecf{f}_{ij}\left(\fnt{\tau}_{i}\right)-\fnt{v}_{i}^{T}\vecf{f}_{ij}\left(\fnt{\theta}_{ij}\right)\right)-\left(\psi\left(\fnt{u}_{j}\right)-\psi\left(\fnt{u}_{i}\right)\right)\cdot\hat{\fnt{n}}_{ij}\right]\aligned{\le}0,\tag{$\fnt{M}_{ii}>0$}\\
-\sum_{j}^{ }\left\lVert\fnt{n}_{ij}\right\rVert\left[\left(\fnt{v}_{j}-\fnt{v}_{i}\right)^{T}\vecf{f}_{ij}^{H}-\left(\psi\left(\fnt{u}_{j}\right)-\psi\left(\fnt{u}_{i}\right)\right)\cdot\hat{\fnt{n}}_{ij}\right]\aligned{ }\\
+\sum_{j}^{ }\left\lVert\fnt{n}_{ij}\right\rVert\left[\fnt{v}_{i}^{T}\left(\fnt{u}_{i}-\fnt{u}_{j}\right)\right]\fnt{\theta}_{ij}-\sum_{j}^{ }\left\lVert\fnt{n}_{ij}\right\rVert\left[\fnt{v}_{j}^{T}\left(\fnt{u}_{i}-\fnt{u}_{j}\right)\right]\fnt{\tau}_{i}\aligned{\ge}0,\\
\fnt{a}_{i}^{T}\begin{bmatrix}\fnt{\theta}_{i}\\
\fnt{\tau}_{i}\end{bmatrix}\aligned{\ge}b_{i},\end{align*}
where
\begin{align*}\left(\fnt{a}_{i}\right)_{j}\aligned{=}\fnt{a}_{ij}=\begin{cases}\left\lVert\fnt{n}_{ij}\right\rVert\left[\fnt{v}_{i}^{T}\left(\fnt{u}_{i}-\fnt{u}_{j}\right)\right]&j=\text{end}\\
-\sum_{j}^{ }\left\lVert\fnt{n}_{ij}\right\rVert\left[\fnt{v}_{j}^{T}\left(\fnt{u}_{i}-\fnt{u}_{j}\right)\right]&\text{otherwise}\end{cases},\\
b_{i}\aligned{=}\sum_{j}^{ }\left\lVert\fnt{n}_{ij}\right\rVert\left[\left(\fnt{v}_{j}-\fnt{v}_{i}\right)^{T}\vecf{f}_{ij}^{H}-\left(\psi\left(\fnt{u}_{j}\right)-\psi\left(\fnt{u}_{i}\right)\right)\cdot\hat{\fnt{n}}_{ij}\right].\end{align*}
We call this the \textit{relaxed entropy inequality}. Then, the pre-symmetrized diffusion coefficients and $\fnt{\tau}$-coefficients can be determined through the same optimization problem as before:
\begin{align*}\min_{\begin{matrix}\fnt{a}_{i}^{T}\begin{bmatrix}\hat{\fnt{\theta}}\\
\fnt{\tau}\end{bmatrix}\ge b_{i}\\
\hat{\fnt{\theta}}\ge0\\
\fnt{\tau}\ge0\end{matrix}}\begin{bmatrix}\hat{\fnt{\theta}}\\
\fnt{\tau}\end{bmatrix}^{T}\begin{bmatrix}\hat{\fnt{\theta}}\\
\fnt{\tau}\end{bmatrix}.\labell{eq:DKnapsack}\end{align*}
We note that the property $\fnt{a}_{i}\ge0$ is lost in this strategy. However, the above problem still yields an explicit solution. Note, it can be rewritten with elementwise upper bounds. Specifically, it is equivalent when given the constraint $0\le\hat{\fnt{\theta}}\le\infty$. It is shown in \cite{Christner25} that Algorithm 3.1 converges in exactly one iteration to the solution for such a problem. Therefore, an explicit formula can be derived for the optimal solution for the optimization problem \eqref{eq:DKnapsack}:
\begin{align*}\fnt{a}_{ij}^{c}\aligned{=}\begin{cases}\fnt{a}_{ij}&\fnt{a}_{ij}\ge0\\
0&\text{otherwise}\end{cases},&\begin{bmatrix}\hat{\fnt{\theta}}_{i}\\
\fnt{\tau}_{i}\end{bmatrix}=\frac{b_{i}\fnt{a}_{i}^{c}}{\fnt{a}_{i}^{T}\fnt{a}_{i}^{c}}.\end{align*}
Note that since each $\fnt{\tau}$-coefficient is unique to each node and does not apply diffusion to $\fnt{r}$, no symmetrization is needed for the $\fnt{\tau}$-coefficients: just for the diffusion coefficients. 

This strategy of relaxing the entropy inequality can also be applied to knapsack limiting. In which case, the resulting vector $\fnt{a}_{i}$ and scalar $b_{i}$ have the following form:
\begin{align*}\left(\fnt{a}_{i}\right)_{j}\aligned{=}\fnt{a}_{ij}=\begin{cases}\left\lVert\fnt{n}_{ij}\right\rVert\left[\fnt{v}_{i}^{T}\left(\vecf{f}_{ij}^{L}-\vecf{f}_{ij}^{H}\right)\right]&j=\text{end}\\
-\sum_{j}^{ }\left\lVert\fnt{n}_{ij}\right\rVert\left[\fnt{v}_{j}^{T}\left(\vecf{f}_{ij}^{L}-\vecf{f}_{ij}^{H}\right)\right]&\text{otherwise}\end{cases},\\
b_{i}\aligned{=}\sum_{j}^{ }\left\lVert\fnt{n}_{ij}\right\rVert\left[\left(\fnt{v}_{j}-\fnt{v}_{i}\right)^{T}\vecf{f}_{ij}^{H}-\left(\psi\left(\fnt{u}_{j}\right)-\psi\left(\fnt{u}_{i}\right)\right)\cdot\hat{\fnt{n}}_{ij}\right].\end{align*}
Then, the pre-symmetrization diffusion coefficients can be determined through the same quadratic continuous knapsack problem:
\begin{align*}\min_{\begin{matrix}\fnt{a}_{i}^{T}\begin{bmatrix}\hat{\fnt{\theta}}\\
\fnt{\tau}\end{bmatrix}\ge b_{i}\\
0\le\hat{\fnt{\theta}}\le1-\fnt{\ell^{c}}_{i}\\
0\le\fnt{\tau}\le1\end{matrix}}\begin{bmatrix}\hat{\fnt{\theta}}\\
\fnt{\tau}\end{bmatrix}^{T}\begin{bmatrix}\hat{\fnt{\theta}}\\
\fnt{\tau}\end{bmatrix}.\labell{eq:PPKnapsack}\end{align*}
\subsection{Relaxed Entropy Inequality Numerical Results}\label{Relaxed Entropy Inequality Numerical Results}

In this section, we revisit the numerical experiments done in Section \eqref{Numerical Results} with the relaxed versions of ECAV-FD and KL-FD-HLLC. We will refer to each scheme as RECAV-FD and RKL-FD-HLLC respectively. We will demonstrate that the ECAV-FD and KL-FD schemes result in excessive dissipation in high orders, while RECAV-FD and RKL-FD-HLLC help to recover it. However, we will also show that RECAV-FD and RKL-FD-HLLC result in more oscillations in the generated solutions. 

First, we perform the same convergence experiments to analyze the order of spatial convergence of the relaxed schemes. In Tables \eqref{tab:DECAVConvergence} and \eqref{tab:DKLConvergence}, the $L^{2}$ norm of the solution at the final time is computed against the analytical solution for various node counts $n$. This is done for polynomial orders $N=2$ through $N=5$. The errors are again used to estimate the spatial order of convergence. We observe that the optimal order of convergence $\mathcal{O}\left(\Delta x^{N}\right)$ is attained by each scheme. Moreover, we observe that for odd $N$, the errors exhibits an improved order $\mathcal{O}\left(\Delta x^{N+1}\right)$ convergence. 

\begin{table}[h!]
\centering
\resizebox{\textwidth}{!}{
\begin{tabulary}{\textwidth}{{|C|C|C|C|C|C|C|C|C|}}
\hline
 & \multicolumn{2}{|c|}{$N=2$} & \multicolumn{2}{|c|}{$N=3$} & \multicolumn{2}{|c|}{$N=4$} & \multicolumn{2}{|c|}{$N=5$} \\
\hline
$n$ & \multicolumn{1}{|c}{$L^{2}\ $Error} & Rate & \multicolumn{1}{|c}{$L^{2}\ $Error} & Rate & \multicolumn{1}{|c}{$L^{2}\ $Error} & Rate & \multicolumn{1}{|c}{$L^{2}\ $Error} & Rate \\
\hline
$16$ & \multicolumn{1}{|c}{$1.65\cdot10^{-1}$} & $-$ & \multicolumn{1}{|c}{$6.28\cdot10^{-3}$} & $-$ & \multicolumn{1}{|c}{$6.28\cdot10^{-3}$} & $-$ & \multicolumn{1}{|c}{$1.81\cdot10^{-3}$} & $-$ \\
\hline
$32$ & \multicolumn{1}{|c}{$4.16\cdot10^{-2}$} & $1.99$ & \multicolumn{1}{|c}{$4.36\cdot10^{-4}$} & $3.85$ & \multicolumn{1}{|c}{$4.36\cdot10^{-4}$} & $3.85$ & \multicolumn{1}{|c}{$7.02\cdot10^{-5}$} & $4.69$ \\
\hline
$64$ & \multicolumn{1}{|c}{$1.05\cdot10^{-2}$} & $1.99$ & \multicolumn{1}{|c}{$2.53\cdot10^{-5}$} & $4.11$ & \multicolumn{1}{|c}{$2.53\cdot10^{5}$} & $4.11$ & \multicolumn{1}{|c}{$1.13\cdot10^{-6}$} & $5.95$ \\
\hline
$128$ & \multicolumn{1}{|c}{$2.62\cdot10^{-3}$} & $2.00$ & \multicolumn{1}{|c}{$1.37\cdot10^{-6}$} & $4.20$ & \multicolumn{1}{|c}{$1.37\cdot10^{-6}$} & $4.20$ & \multicolumn{1}{|c}{$1.06\cdot10^{-8}$} & $6.74$ \\
\hline
$256$ & \multicolumn{1}{|c}{$6.55\cdot10^{-4}$} & $2.00$ & \multicolumn{1}{|c}{$8.08\cdot10^{-8}$} & $4.09$ & \multicolumn{1}{|c}{$8.08\cdot10^{-8}$} & $4.09$ & \multicolumn{1}{|c}{$8.35\cdot10^{-11}$} & $6.99$ \\
\hline
$512$ & \multicolumn{1}{|c}{$1.64\cdot10^{-4}$} & $2.00$ & \multicolumn{1}{|c}{$4.97\cdot10^{-9}$} & $4.02$ & \multicolumn{1}{|c}{$4.97\cdot10^{-9}$} & $4.02$ & \multicolumn{1}{|c}{$8.54\cdot10^{-13}$} & $6.61$ \\
\hline
\end{tabulary}}
\caption{Errors and estimated orders of convergence of RECAV-FD for degrees $N.$}
\label{tab:DECAVConvergence}
\end{table}

\begin{table}[h!]
\centering
\resizebox{\textwidth}{!}{
\begin{tabulary}{\textwidth}{{|C|C|C|C|C|C|C|C|C|}}
\hline
 & \multicolumn{2}{|c|}{$N=2$} & \multicolumn{2}{|c|}{$N=3$} & \multicolumn{2}{|c|}{$N=4$} & \multicolumn{2}{|c|}{$N=5$} \\
\hline
$n$ & \multicolumn{1}{|c}{$L^{2}\ $Error} & Rate & \multicolumn{1}{|c}{$L^{2}\ $Error} & Rate & \multicolumn{1}{|c}{$L^{2}\ $Error} & Rate & \multicolumn{1}{|c}{$L^{2}\ $Error} & Rate \\
\hline
$16$ & \multicolumn{1}{|c}{$1.65\cdot10^{-1}$} & $-$ & \multicolumn{1}{|c}{$7.05\cdot10^{-3}$} & $-$ & \multicolumn{1}{|c}{$7.05\cdot10^{-3}$} & $-$ & \multicolumn{1}{|c}{$2.74\cdot10^{-3}$} & $-$ \\
\hline
$32$ & \multicolumn{1}{|c}{$4.16\cdot10^{-2}$} & $1.99$ & \multicolumn{1}{|c}{$5.42\cdot10^{-4}$} & $3.70$ & \multicolumn{1}{|c}{$5.42\cdot10^{-4}$} & $3.70$ & \multicolumn{1}{|c}{$1.17\cdot10^{-4}$} & $4.55$ \\
\hline
$64$ & \multicolumn{1}{|c}{$1.04\cdot10^{-2}$} & $1.99$ & \multicolumn{1}{|c}{$3.07\cdot10^{-5}$} & $4.14$ & \multicolumn{1}{|c}{$3.07\cdot10^{-5}$} & $4.14$ & \multicolumn{1}{|c}{$1.82\cdot10^{-6}$} & $6.01$ \\
\hline
$128$ & \multicolumn{1}{|c}{$2.62\cdot10^{-3}$} & $2.00$ & \multicolumn{1}{|c}{$1.53\cdot10^{-6}$} & $4.33$ & \multicolumn{1}{|c}{$1.53\cdot10^{-6}$} & $4.33$ & \multicolumn{1}{|c}{$1.73\cdot10^{-8}$} & $6.72$ \\
\hline
$256$ & \multicolumn{1}{|c}{$6.55\cdot10^{-4}$} & $2.00$ & \multicolumn{1}{|c}{$8.42\cdot10^{-8}$} & $4.18$ & \multicolumn{1}{|c}{$8.42\cdot10^{-8}$} & $4.18$ & \multicolumn{1}{|c}{$1.44\cdot10^{-10}$} & $6.91$ \\
\hline
$512$ & \multicolumn{1}{|c}{$1.64\cdot10^{-4}$} & $2.00$ & \multicolumn{1}{|c}{$5.03\cdot10^{-9}$} & $4.07$ & \multicolumn{1}{|c}{$5.03\cdot10^{-9}$} & $4.07$ & \multicolumn{1}{|c}{$1.41\cdot10^{-12}$} & $6.67$ \\
\hline
\end{tabulary}}
\caption{Errors and estimated orders of convergence of RKL-HLLC-FD for degrees$\ N.$}
\label{tab:DKLConvergence}
\end{table}

Now, we examine the behavior of the relaxed schemes in the Sod shocktube problem. In Figure \eqref{fig:DShuOsher}, we plot the solution to the Sod shocktube problem using each relaxed scheme. We observe that the relaxed versions of each scheme tend to generate more oscillatory solutions, compared against their unrelaxed versions, whose solutions are shown in Figure \eqref{fig:ShuOsher}. However, we observe that for $n=500$, each relaxed scheme does a better job approximating the highly oscillatory region to the right. 

\begin{figure}%
    \centering
    \subfloat[\centering $n=500\ $nodes]{{\includegraphics[width=0.333\textwidth]{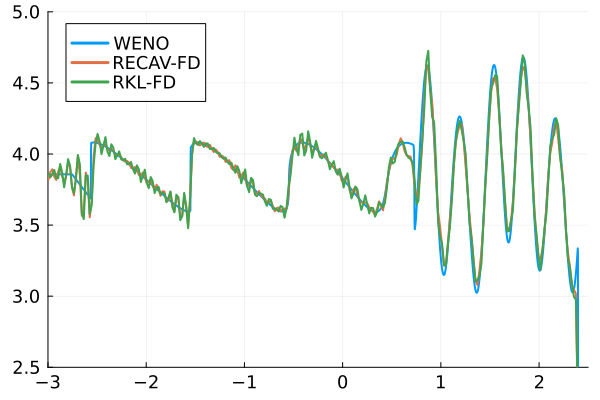} }}%
    \subfloat[\centering $n=1500\ $nodes$,\ $RECAV-FD]{{\includegraphics[width=0.333\textwidth]{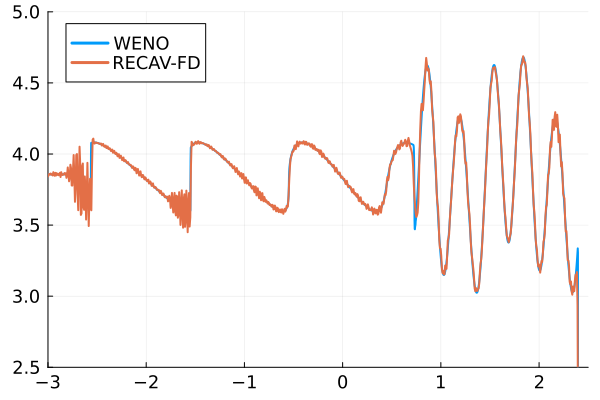} }}%
    \subfloat[\centering $n=1500$ nodes, RKL-FD-HLLC]{{\includegraphics[width=0.333\textwidth]{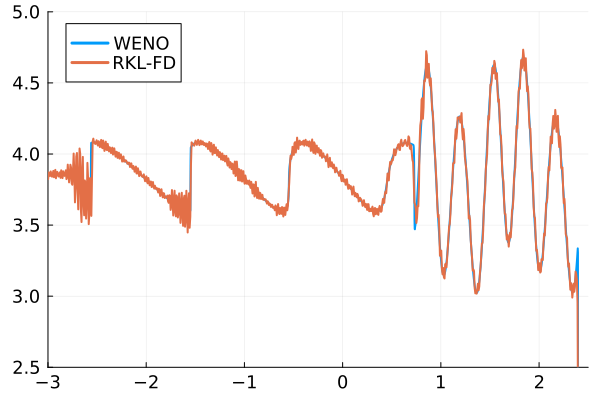} }}%
    \caption{Density of the Shu-Osher shock tube solution at final time$\ T=1.8\ $with$\ N=4.$}%
    \label{fig:DShuOsher}%
\end{figure}

In Figure \eqref{fig:ShuOsher6}, we show the same plots, but this time with a 6th order approximation. We observe that the solutions generated by the original, unrelaxed schemes are more dissipative than the solutions in Figure \eqref{fig:ShuOsher}. We also again observe that for $n=500$, each relaxed scheme does a better job approximating the highly oscillatory region to the right. However, the solutions generated by the relaxed schemes RECAV-FD and RKL-FD appear more oscillatory. We observe this behavior in general. 

\begin{figure}%
    \centering
    \subfloat[\centering $n=500\ $nodes]{{\includegraphics[width=0.333\textwidth]{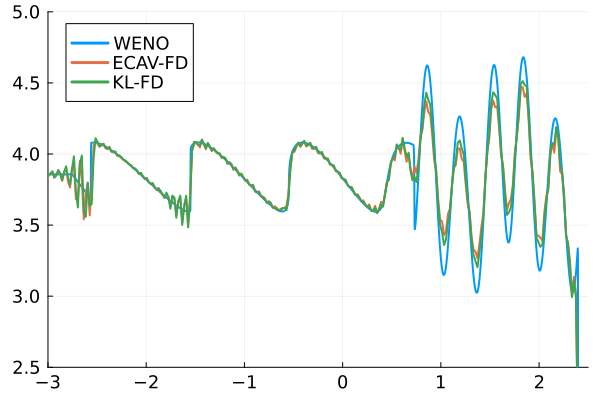} }}%
    \subfloat[\centering $n=1500\ $nodes$,\ $ECAV-FD]{{\includegraphics[width=0.333\textwidth]{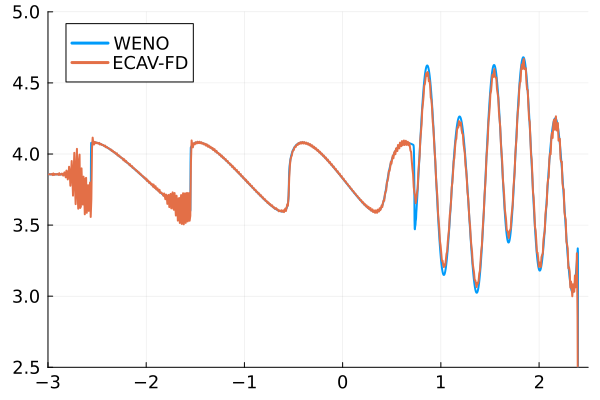} }}%
    \subfloat[\centering $n=1500\ $nodes$,\ $KL-FD]{{\includegraphics[width=0.333\textwidth]{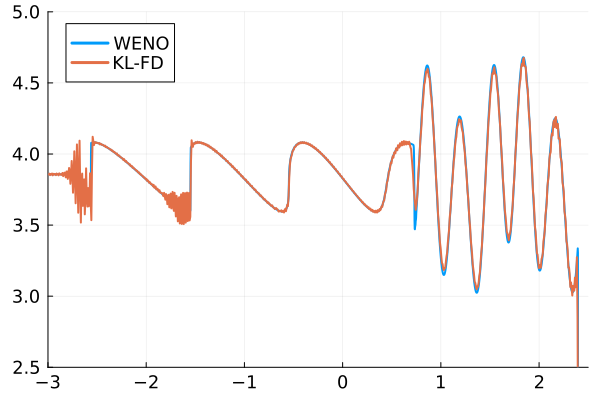} }}\\
    \subfloat[\centering $n=500\ $nodes]{{\includegraphics[width=0.333\textwidth]{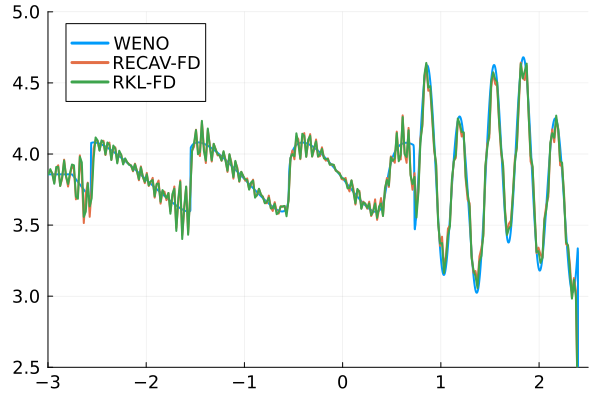} }}%
    \subfloat[\centering $n=1500\ $nodes$,\ $RECAV-FD]{{\includegraphics[width=0.333\textwidth]{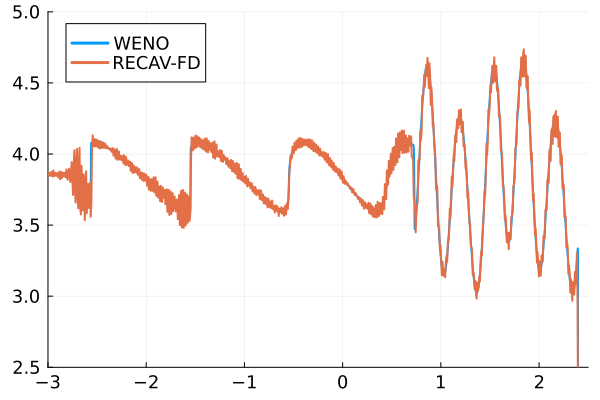} }}%
    \subfloat[\centering $n=1500\ $nodes$,\ $RKL-FD]{{\includegraphics[width=0.333\textwidth]{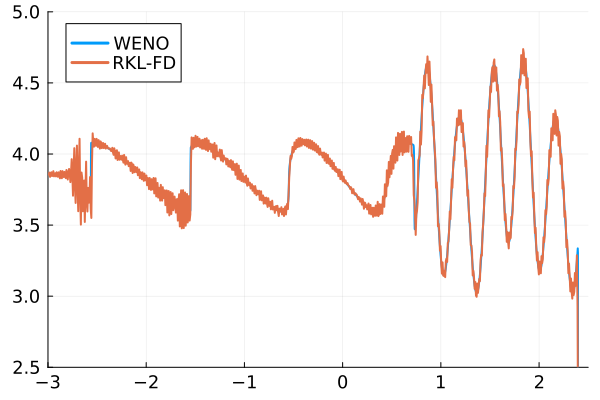} }}%
    \caption{Density of the Shu-Osher shock tube solution at final time$\ T=1.8\ $with$\ N=6.$}%
    \label{fig:ShuOsher6}%
\end{figure}

\bibliographystyle{plain}
\bibliography{references}

\begin{thebibliography}{10}

\bibitem{JesseAV}
Jesse Chan.
\newblock An artificial viscosity approach to high order entropy stable discontinuous {G}alerkin methods.
\newblock {\em arXiv preprint arXiv:2501.16529}, 2025.

\bibitem{KHIPos}
Jesse Chan, Hendrik Ranocha, Andrés~M. Rueda-Ramírez, Gregor Gassner, and Tim Warburton.
\newblock {On the Entropy Projection and the Robustness of High Order Entropy Stable Discontinuous Galerkin Schemes for Under-Resolved Flows}.
\newblock {\em Frontiers in Physics}, Volume 10 - 2022, 2022.

\bibitem{Chen17}
Tianheng Chen and Chi-Wang Shu.
\newblock {Entropy stable high order discontinuous Galerkin methods with suitable quadrature rules for hyperbolic conservation laws}.
\newblock {\em Journal of Computational Physics}, 345:427--461, 2017.

\bibitem{Christner25}
Brian Christner and Jesse Chan.
\newblock Entropy stable nodal discontinuous {G}alerkin methods via quadratic knapsack limiting.
\newblock {\em arXiv preprint arXiv:2507.14488}, 2025.

\bibitem{Fisher13}
Travis~C Fisher and Mark~H Carpenter.
\newblock {High-order entropy stable finite difference schemes for nonlinear conservation laws: Finite domains}.
\newblock {\em Journal of Computational Physics}, 252:518--557, 2013.

\bibitem{Gassner16}
Gregor~J. Gassner, Andrew~R. Winters, and David~A. Kopriva.
\newblock {Split form nodal discontinuous Galerkin schemes with summation-by-parts property for the compressible Euler equations}.
\newblock {\em Journal of Computational Physics}, 327:39--66, 2016.

\bibitem{Godlewski13}
Edwige Godlewski and Pierre-Arnaud Raviart.
\newblock {\em {Numerical approximation of hyperbolic systems of conservation laws}}, volume 118.
\newblock Springer Science \& Business Media, 2013.

\bibitem{Guermond19}
Jean-Luc Guermond, Bojan Popov, and Ignacio Tomas.
\newblock {Invariant domain preserving discretization-independent schemes and convex limiting for hyperbolic systems}.
\newblock {\em Computer Methods in Applied Mechanics and Engineering}, 347:143--175, 2019.

\bibitem{Harten83}
Amiram Harten, Peter~D. Lax, and Bram~van Leer.
\newblock {On Upstream Differencing and Godunov-Type Schemes for Hyperbolic Conservation Laws}.
\newblock {\em SIAM Review}, 25(1):35--61, 1983.

\bibitem{Lin24}
Yimin Lin and Jesse Chan.
\newblock {High order entropy stable discontinuous Galerkin spectral element methods through subcell limiting}.
\newblock {\em Journal of Computational Physics}, 498:112677, 2024.

\bibitem{Lin23}
Yimin Lin, Jesse Chan, and Ignacio Tomas.
\newblock {A positivity preserving strategy for entropy stable discontinuous Galerkin discretizations of the compressible Euler and Navier-Stokes equations}.
\newblock {\em Journal of Computational Physics}, 475:111850, 2023.

\bibitem{Pazner21}
Will Pazner.
\newblock {Sparse invariant domain preserving discontinuous Galerkin methods with subcell convex limiting}.
\newblock {\em Computer Methods in Applied Mechanics and Engineering}, 382:113876, 2021.

\bibitem{Popovas25}
Andrius Popovas.
\newblock {DISPATCH methods: An approximate, entropy-based Riemann solver for ideal magnetohydrodynamics}.
\newblock {\em Astronomy Astrophysics}, 698:A69, June 2025.

\bibitem{OrdinaryDiffEq}
Christopher Rackauckas and Qing Nie.
\newblock {DifferentialEquations.jl – A Performant and Feature-Rich Ecosystem for Solving Differential Equations in Julia}.
\newblock {\em The Journal of Open Research Software}, 5(1), 2017.
\newblock Exported from https://app.dimensions.ai on 2019/05/05.

\bibitem{Trixi1}
Hendrik Ranocha, Michael Schlottke-Lakemper, Andrew~Ross Winters, Erik Faulhaber, Jesse Chan, and Gregor Gassner.
\newblock Adaptive numerical simulations with {T}rixi.jl: {A} case study of {J}ulia for scientific computing.
\newblock {\em Proceedings of the JuliaCon Conferences}, 1(1):77, 2022.

\bibitem{RuedaRamirez23}
Andr{\'e}s~M Rueda-Ram{\'\i}rez, Benjamin Bolm, Dmitri Kuzmin, and Gregor~J Gassner.
\newblock {Monolithic convex limiting for Legendre-Gauss-Lobatto discontinuous Galerkin spectral-element methods}.
\newblock {\em Communications on Applied Mathematics and Computation}, 6(3):1860--1898, 2024.

\bibitem{Trixi3}
Michael Schlottke-Lakemper, Gregor~J Gassner, Hendrik Ranocha, Andrew~R Winters, Jesse Chan, and Andrés Rueda-Ramírez.
\newblock {T}rixi.jl: {A}daptive high-order numerical simulations of hyperbolic {PDE}s in {J}ulia.
\newblock \url{https://github.com/trixi-framework/Trixi.jl}, 2025.

\bibitem{Svard21}
Magnus Sv{\"a}rd.
\newblock {Entropy stable boundary conditions for the Euler equations}.
\newblock {\em Journal of Computational Physics}, 426:109947, 2021.

\bibitem{Svard25}
Magnus Sv{\"a}rd and Anita Gjesteland.
\newblock {Entropy-stable in-and outflow boundary conditions for the compressible Navier-Stokes equations}.
\newblock {\em arXiv preprint arXiv:2506.21065}, 2025.

\bibitem{Vilar25}
François Vilar.
\newblock {Local subcell monolithic DG/FV convex property preserving scheme on unstructured grids and entropy consideration}.
\newblock {\em Journal of Computational Physics}, 521:113535, 2025.

\end{thebibliography}
\end{document}